\def\d{\delta}
\def\D{\Delta}
\def\s{\sigma}
\def\ve{\varepsilon}
\def\vp{\varphi}
\def\Id{\mathop{\rm Id}\nolimits}
\def\lra{\longrightarrow}
\def\ot{\otimes}
\def\odots{\ot\cdots\ot}
\def\lra{\longrightarrow}
\def\rt{\triangleright}
\def\lt{\triangleleft}
\def\lbiprod{{>\!\!\!\triangleleft\kern-.33em\cdot\, }}
\def\rbiprod{{\cdot\kern-.33em\triangleright\!\!\!<}}
\def\D{\Delta}
\def\Id{\mathop{\rm Id}\nolimits}
\newcommand{\G}[1]{\mathfrak{#1}}
\newcommand{\C}[1]{\mathcal{#1}}
\newcommand{\B}[1]{\mathbb{#1}}
\newcommand{\CB}{{\rm CB}}
\newcommand{\CH}{{\rm CH}}
\newcommand{\Rw}{\text{Row}}
\newcommand{\Cl}{\text{Col}}
\newcommand{\tor}{{\rm Tor}}
\newcommand{\tr}{\triangleright}
\newcommand{\tl}{\triangleleft}
\renewcommand{\leq}{\leqslant}
\renewcommand{\geq}{\geqslant}
\numberwithin{equation}{section}
\newtheorem{theorem}{Theorem}[section]
\newtheorem{proposition}[theorem]{Proposition}
\newtheorem{lemma}[theorem]{Lemma}
\theoremstyle{definition}
\title{Homology of quantum linear groups}
\author{A. Kaygun}
\address{Istanbul Technical University, Istanbul, Turkey}
\email{kaygun@itu.edu.tr}
\author{S. Sütlü}
\address{Işık University, Istanbul, Turkey}
\email{serkan.sutlu@isikun.edu.tr}
\begin{document}

\begin{abstract}
  For every $n\geq 1$, we calculate the Hochschild homology of the quantum
  monoids $M_q(n)$, and the quantum groups $GL_q(n)$ and $SL_q(n)$ with
  coefficients in a 1-dimensional module coming from a modular pair in
  involution.
\end{abstract}

\maketitle
\section*{Introduction}

In this article we calculate the Hochschild homology of the quantum monoids
$M_q(n)$, and quantum groups $GL_q(n)$ and $SL_q(n)$,
\cite{KlimSchm-book,ParshallWang:QuantumLinearGroups}, with coefficients in a
1-dimensional module ${}_{f_q^{-1}}k$ coming from a modular pair in involution
(MPI) defined on $GL_q(n)$ and $SL_q(n)$ when $q$ is not a roots of unity.  We have
an effective algorithm that calculates the explicit classes, and generates the
corresponding Betti sequences.  We also show that these homologies with
coefficients in the MPI are direct summands of the Hochschild homologies of
$GL_q(n)$ and $SL_q(n)$ with coefficients in themselves, albeit with a degree
shift.

To achieve our goal, we introduce a general Hochschild-Serre type spectral
sequence for flat algebra extensions of the form $Q\subseteq P$ through which
calculating the homology of $P$ reduces to calculating the homology of $Q$ and
the $Q$-relative homology of $P$.  We then calculate the homology of $M_q(n)$ by
using the lattice of extensions $M_q(a,b)\subseteq M_q(n)$ with
$1\leq a,b\leq n$, and related reductions in homology.  Since $GL_q(n)$ is the
localization of $M_q(n)$ at the quantum determinant, it is almost immediate to
obtain the homology of $GL_q(n)$ from that of $M_q(n)$ by a localization in
homology~\cite[1.1.17]{Loday-book}.  Calculating the homology of $SL_q(n)$
becomes immediate since $GL_q(n) = SL_q(n)\otimes k[\C{D}_q,\C{D}_q^{-1}]$ where
$\C{D}_q$ is the quantum determinant~\cite[Proposition]{LevaStaff93}.

The Hochschild and cyclic homology of $SL_q(2)$ are calculated
in~\cite{MasuNakaWata90,Ross90} using a specific resolution for $SL_q(2)$.  The
Hochschild homology of $SL_q(2)$ with coefficients in itself twisted by the
modular automorphism is calculated in~\cite{HadfKrae05}.  The Hochschild
cohomology of $SL_q(n)$ with coefficients in ${}_\eta k_\eta$
(Definition~\ref{trivial-character} and
Proposition~\ref{prop:quantum-matrices-with-trivial-coeff}) is computed
in~\cite{HadfKrae06} using the Koszul approach similar to \cite{Ross90}.
In~\cite{HadfKrae06} authors also show that Hochschild homology and cohomology
of $SL_q(n)$ satisfy Poincare duality.  The Hochschild homology of the
restricted dual $\C{O}_h(G)$ of the quantum universal enveloping algebra
$U_h(\G{g})$, on the other hand, is described in~\cite{FengTsyg91} for a
semi-simple Lie group $G$ with its Lie algebra $\G{g}$.  There are also results
for subgroups of $GL_q(n)$.  Recall that for a semi-simple Lie group $G$, the
kernel of the natural morphism $G_q\to G$ is called the \emph{quantized
  Frobenius kernel of $G$}.  When $q$ is a roots of unity with high enough
order, the cohomology of the quantized Frobenius kernel of the Borel subgroup
$B_q(n)$ of $GL_q(n)$ with trivial coefficients comes from the quantum group
$N_q(n)$ of the nilpotent part in even degrees, while it is trivial in odd
degrees by~\cite{Hu99}.  The Hochschild homology of quantized Frobenius kernels
of other groups are calculated in~\cite{GinzKuma93,ParsWang92,ParsWang93} at
roots of unity.

A smash biproduct algebra (or \emph{a twisted tensor product
  algebra~\cite{CapEtAl:TwistedTensorProduct}}) $A\#_R B$ involves a
two-way interaction $R\colon B\otimes A\to A\otimes B$ between unital
associative algebras $A$ and $B$, as opposed to a one-way-interaction
in an ordinary smash product of an algebra with a Hopf algebra.
Calculating the absolute homology of a smash biproduct reduces to
calculating the homology of a bisimplicial object as we observed in
\cite{KaygSutl18}.  In the same paper we calculated the homology of
several algebras including of $M_q(2)$ using extensions of the form
$A\subseteq A\#_R B$.  However, calculating homologies of $M_q(n)$ for
$n>2$ were beyond the reach of the methods we developed
in~\cite{KaygSutl18}.  We needed new tools and techniques to deal with
more general types of extensions as we observed
in~\cite{Kaygun:NCFibrations}.  Thus the method we develop in
Section~\ref{HS-Spectral-Sequence} in this paper evolved out
of~\cite{KaygSutl18} and \cite{Kaygun:NCFibrations} with a different
set of assumptions.

\subsection*{Plan of the paper}

After developing a Hochschild-Serre type spectral sequence for flat
algebra extensions $Q\subseteq P$ in
Section~\ref{HS-Spectral-Sequence}, we review some basic facts and
definitions about quantum monoids $M_q(n)$, and groups $GL_q(n)$ and
$SL_q(n)$ in Section~\ref{QLG}.  Then in Section~\ref{sect:MLN} we
calculate the Hochschild homology of $M_q(n)$, $GL_q(n)$ and $SL_q(n)$
for every $n\geq 1$ with coefficients in a 1-dimensional module
${}_{f_{q,n}^{-1}} k$ defined using a modular pair in involution
$(f_{q,n}^{-1},1)$ for the Hopf algebras $GL_q(n)$ and $SL_q(n)$. In
Section~\ref{sect:calculations}, we explicitly write our calculations
for the cases $n=2,3,4$.

\subsection*{Notations and conventions}

We fix a ground field $k$ and an element $q\in k^\times$ which is not
a roots of unity.  All unadorned tensor products $\otimes$ are over
$k$.  All algebras are assumed to be unital and associative, but not
necessarily commutative or finite dimensional over $k$.  We use
$\langle X\rangle$ to denote the two-sided ideal generated by a subset
$X$ of elements in an algebra, and $Span_k(X)$ for the $k$-vector
space spanned by a subset $X$ of elements in a vector space.  For a
fixed vector space $V$, we use $\Lambda^*(V)$ to denote the exterior
algebra over $V$, but used only as a vector space.  For a fixed
algebra $A$, we use $\CB_*(A)$ and $\CH_*(A)$ respectively for the bar
complex and the Hochschild complex of the algebra $A$.  We use
$H_*(A)$ to denote the Hochschild homology of $A$.  If the complexes,
and therefore, homology includes coefficients other than the ambient
algebra $A$, we will indicate this using a pair $(A,M)$.

\subsection*{Acknowledgments}

This work completed while the first author was at the Department of Mathematics
and Statistics of Queen's University on academic leave from Istanbul Technical
University.  The first author would like to thank both organizations for their
support.

\section{A Hochschild-Serre Type Spectral Sequence}\label{HS-Spectral-Sequence}

In this section, we assume $P$ and $Q$ are unital associative
algebras, together with a fixed morphism of algebras $\vp:Q\to P$.

\subsection{The mapping cylinder algebra}\label{subsect-mapping-cylinder-alg}

Let $Z := P\oplus Q$ be the unital associative algebra given by the product
\[ 
(p,q)\cdot(p',q') = (pp' + p\varphi(q') + \varphi(q)p', qq') 
\] 
for any $(p,q),(p',q')\in Z$, and the unit $(0,1) \in Z$. Accordingly,
$P \leq Z$ is an ideal.  Moreover, since $P$ is a unital algebra, the homology
of its bar complex vanishes in any positive degree. As such, $P \leq Z$ is an
$H$-unital ideal,~\cite{Wodz89}.

Let $X$ be a left $P$-module, and $Y$ a right $P$-module. Then, they both can
be considered as $Z$-modules via
\[ 
x\lt(p,q) := x\lt p + x\lt\varphi(q)  \qquad (p,q)\rt y = p\rt y + \varphi(q)\rt y. 
\]
for any $x \in X$, and any $(p,q),(p',q') \in Z$. Similarly, $Y$ is a left $Z$-module.

\subsection{A filtration on the bar complex}

Let us now consider the bar complex $\CB_*(X,Z,Y)$ of the mapping cylinder algebra $Z:=P\oplus Q$ with coefficients in a right $P$-module $X$ and a left $P$-module $Y$.

We consider first the increasing filtration on $\CB_*(X,Z,Y)$ given by
\[ 
  G^i_{i+j} = \sum_{n_0+\cdots+n_i= j} X\otimes P^{\otimes
    n_0}\otimes Z\otimes\cdots\otimes P^{\otimes n_{i-1}}\otimes Z
  \otimes P^{\otimes n_i}\otimes Y \subseteq \CB_{i+j}(X,Z,Y).
\]
The bar differential interacts with the filtration as
$d(G^i_{i+j}) \subseteq G^{i}_{i+j-1}$.
Then the associated graded complex is given by
\begin{equation}\label{CB-d_0}
 E^0_{i,j} := G^{i}_{i+j}/G^{i-1}_{i+j} \cong 
   \bigoplus_{n_0+\cdots+n_i=j}
   X\otimes P^{\otimes n_0}\otimes Q\otimes\cdots\otimes
   P^{\otimes n_{i-1}}\otimes Q \otimes P^{\otimes n_i}\otimes Y,  
\end{equation}
with induced differentials.  However, since we cannot reduce the
number of $Q$'s in the quotient complex, one can think of the quotient
complex is a graded product of bar complexes with induced differentials
\[
  E^0_{i,*} = \CB_*(X,P,Q)\otimes_Q \underbrace{\CB_*(Q,P,Q) \otimes_Q\cdots\otimes_Q\CB_*(Q,P,Q)}_{i\text{-times}}\otimes_Q \CB_*(Q,P,Y)
\]
where $P$ acts on $Q$ via 0.  Since we observe that $P$ acts on $Q$
trivially, the $E^1$-term is given by
\[
E^1_{i,j} = H_{i+j}(E^0_{i,\ast}; d_0) = \begin{cases}
\tor_j^P(X,Y) & \text{\rm if} \,\, i=0,\\
0 & \text{\rm if}\,\, i\neq 0.
\end{cases}
\]
The spectral sequence then degenerates (for further details on spectral sequences we refer the reader to \cite{McCleary:SpectralSequences}), and we arrive at the following result.

\begin{proposition}\label{prop:HomologyCylinderAlgebra}
Given two algebras $P$ and $Q$, together with an algebra morphism $\vp:Q\to P$, let $Z:=P\oplus Q$ be the mapping cylinder algebra, $X$ a right $P$-module, and $Y$ be a left $P$-module. Then, $\tor_n^Z(X,Y) \cong \tor_n^P(X,Y)$ for all $n\geq 0$.
\end{proposition}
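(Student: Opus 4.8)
The plan is to extract the statement directly from the filtered bar complex built above. Since $Z$ is unital and $X$, $Y$ are unital $Z$-modules, the bar complex $\CB_*(X,Z,Y)$ computes $\tor_*^Z(X,Y)$, so it suffices to determine the abutment of the spectral sequence attached to the increasing filtration $G^\bullet$. First I would check convergence: the filtration is exhaustive, and in each total degree $n$ it is finite, with $G^{-1}_n = 0$ and $G^n_n = \CB_n(X,Z,Y)$ (a chain of length $n$ has at most $n$ tensor slots equal to $Z$). A filtration bounded in each degree yields a convergent spectral sequence, so no convergence subtleties arise and the abutment is $\tor_*^Z(X,Y)$.

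Next I would confirm the first page exactly as computed above, the essential mechanism being the interplay of two facts. On the one hand, the triviality of the $P$-action on the interposed copies of $Q$ annihilates every boundary operator linking a $P$-block to an adjacent copy of $Q$; this is what makes $E^0_{i,*}$ split into the tensor product $\CB_*(X,P,Q) \otimes_Q \cdots \otimes_Q \CB_*(Q,P,Y)$ recorded above, and resolving the $\otimes_Q$ junctions (where $Q \otimes_Q Q = Q$) exhibits it as an outer tensor product over $k$ in which the copies of $Q$ appear merely as flat coefficient spaces. On the other hand, for $i \geq 1$ the leftmost factor is the one-sided complex $X \otimes P^{\otimes\bullet}$ with its $Q$-end killed, and this is contractible because $P$ is unital: inserting the unit of $P$ supplies an explicit homotopy. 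Since $k$ is a field, the Künneth theorem propagates this acyclicity through the whole tensor product, so $E^1_{i,*} = 0$ for $i \neq 0$; and for $i = 0$ there are no separators, $E^0_{0,*}$ is literally $\CB_*(X,P,Y)$, whence $E^1_{0,j} = \tor_j^P(X,Y)$.

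Finally I would run the degeneration argument. With $E^1$ supported on the single column $i = 0$, every higher differential $d^r$ ($r \geq 1$) changes the filtration index by $r$ and therefore has either source or target in a vanishing column; hence all such differentials are zero and $E^1 = E^\infty$. Consequently, in each total degree $n$ the induced filtration on $\tor_n^Z(X,Y)$ has a single nonzero graded quotient, namely $E^\infty_{0,n} = \tor_n^P(X,Y)$, and this produces the asserted isomorphism $\tor_n^Z(X,Y) \cong \tor_n^P(X,Y)$ for all $n \geq 0$.

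I expect the genuine content to reside entirely in the $E^1$ identification, and specifically in the acyclicity of the associated graded complexes for $i \neq 0$: this is precisely where both hypotheses are indispensable, the trivial $P$-action to split $E^0_{i,*}$ into independent blocks, and the unitality of $P$ (which yields the $H$-unitality invoked above) to contract the boundary blocks. Once that acyclicity and the tensor-over-$Q$ bookkeeping are secured, the passage to the abutment is purely formal, forced by the concentration of $E^1$ in one column.
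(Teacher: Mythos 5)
Your argument is correct and follows essentially the same route as the paper: the same filtration $G^\bullet$ on $\CB_*(X,Z,Y)$, the same identification of $E^0_{i,*}$ as a $\otimes_Q$-product of bar complexes in which $P$ acts on $Q$ by zero, and the same collapse of $E^1$ onto the column $i=0$. You merely make explicit the points the paper leaves implicit --- boundedness of the filtration in each total degree, the extra-degeneracy contraction of the end factor $X\otimes P^{\otimes\bullet}$ (inserted at the dead $Q$-end), and the K\"unneth step over the field $k$ --- and these are indeed where the actual content of the degeneration lies.
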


\subsection{A filtration on the Hochschild homology}\label{remark-Hochschild-version-I}

It is possible to adapt this setting to the Hochschild homology complex. To this end, given a $P$-bimodule $M$, we start with the increasing filtration
\[ 
G^{i}_{i+j} =  \sum_{n_0+\cdots+n_i= j}
 M \ot P^{\otimes n_0}\otimes Z\otimes\cdots\otimes
P^{\otimes n_{i-1}}\otimes Z \otimes P^{\otimes n_i} \subseteq \CH_{i+j}(M,Z)
\]
of $\CH_\ast(Z,M)$. It follows from the observation $b(G^i_{i+j}) \subseteq G^i_{i+j-1}$ that the Hochschild complex $\CH_\ast(Z,M)$ is a filtered differential complex. The associated graded complex is then
\[ E^0_{i,j} := G^i_{i+j}/G^{i-1}_{i+j} = 
   \bigoplus_{n_0+\cdots+n_i=j}
   M \ot P^{\otimes n_0}\otimes Q\otimes\cdots\otimes
   P^{\otimes n_{i-1}}\otimes Q \otimes P^{\otimes n_i} 
\]
together with the induced differential $b_0:E^0_{i,j} \lra E^0_{i,j-1}$ given similar to that of \eqref{CB-d_0} where $P$ acts on $Q$ by 0. Hence, the first page of the associated differential complex appears to be
\[
E^1_{i,j} = H_{i+j}(E^0_{i,\ast}; b_0) = \begin{cases}
H_j(P,M) & \text{\rm if} \,\, i=0,\\
0 & \text{\rm if}\,\, i\neq 0.
\end{cases}
\]
The spectral sequence then degenerates, and we arrive at the following result. 

\begin{proposition}\label{prop:HochschildCylinderAlgebra}
Given two algebras $P$ and $Q$, together with an algebra morphism $\vp:Q\to P$, let $Z:=P\oplus Q$ be the mapping cylinder algebra, $X$ a right $P$-module, and $Y$ be a left $P$-module. Then, $H_n(Z,M) \cong H_n(P,M)$ for any $n \geq 0$, and any $P$-bimodule $M$.   
\end{proposition}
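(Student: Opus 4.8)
The plan is to read the result off the spectral sequence attached to the increasing filtration $\{G^{i}_{i+j}\}$ of $\CH_\ast(Z,M)$ set up above, in complete parallel with the proof of Proposition~\ref{prop:HomologyCylinderAlgebra}. The first thing I would record is that this filtration is bounded in each total degree: in degree $n$ one has $0 = G^{-1}_{n}\subseteq G^{0}_{n}\subseteq\cdots\subseteq G^{n}_{n} = \CH_{n}(Z,M)$, since a tensor word of length $n$ can contain at most $n$ factors drawn from $Q$. A filtration that is finite in each degree produces a spectral sequence $E^{r}_{i,j}$ that converges strongly to $H_{i+j}(Z,M)$, with differentials $d^{r}\colon E^{r}_{i,j}\to E^{r}_{i-r,\,j+r-1}$ \cite{McCleary:SpectralSequences}.

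Next I would feed in the computation of the first page carried out above, namely that $E^{1}_{i,j}=H_{j}(P,M)$ when $i=0$ and $E^{1}_{i,j}=0$ when $i\neq 0$; so the entire page is concentrated in the single column $i=0$. Because of this, for every $r\geq 1$ each differential leaving $E^{r}_{0,j}$ lands in the zero group $E^{r}_{-r,\,j+r-1}$, while each differential entering $E^{r}_{0,j}$ originates in $E^{r}_{r,\,j-r+1}=0$. Hence all higher differentials vanish and the spectral sequence degenerates at the first page, i.e. $E^{\infty}=E^{1}$.

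Finally, since in each total degree $n$ the limit term $E^{\infty}_{i,\,n-i}$ is nonzero only for $i=0$, the induced filtration on $H_{n}(Z,M)$ has a single nontrivial subquotient and there is no extension problem to resolve. Therefore $H_{n}(Z,M)\cong E^{\infty}_{0,n}=E^{1}_{0,n}=H_{n}(P,M)$ for every $n\geq 0$, which is the assertion; note that the modules $X$ and $Y$ appearing in the statement do not enter the conclusion and may be disregarded.

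I expect the only genuine mathematical content to sit in the $E^{1}$-computation I am invoking, and specifically in the vanishing $E^{1}_{i,\ast}=0$ for $i\neq 0$, so that is where I would concentrate any detailed argument. The mechanism is the unitality of $P$: as soon as a word carries at least one $Q$, that $Q$ is inert for the induced differential $b_{0}$ (no face map multiplies into or out of it), so cutting the cyclic Hochschild complex at that $Q$ turns the associated graded into a linear bar-type complex; inserting $1_{P}$ next to the genuine bimodule factor $M$ then supplies a contracting homotopy, annihilating the homology in every degree, including degree zero. For $i=0$ there is no $Q$ to cut, the associated graded is the honest cyclic Hochschild complex $\CH_\ast(P,M)$, and its homology $H_\ast(P,M)$ survives. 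Once this dichotomy is in hand, the degeneration above, and hence the claimed isomorphism, is purely formal.
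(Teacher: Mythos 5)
This is precisely the paper's own proof: the same filtration $G^i_{i+j}$, the same $E^1$-page concentrated in the column $i=0$, and the same degeneration argument, so your write-up matches the paper step for step (and you are right that $X$ and $Y$ are vestigial in the statement of the proposition). One small correction to your sketched justification of $E^1_{i,\ast}=0$ for $i\neq 0$: the contracting homotopy should insert $1_P$ adjacent to an inert $Q$ --- there one neighbouring face map is the identity while the other dies in the associated graded --- rather than adjacent to $M$, where the two neighbouring face maps cancel each other and yield no contraction; this is a local fix, and the paper leaves that step implicit in any case.
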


\subsection{A second filtration on the bar complex}

Let $P$ and $Q$ be two algebras as above, but this time we assume $\vp:Q \to P$ is a left (or right) flat algebra morphism. In other words, $P$ is flat as a left (resp. right) $Q$-module via $\vp:Q \to P$. Let us now consider the increasing filtration
\[ 
  F^i_{i+j} =  \sum_{n_0+\cdots+n_i= j}
  X\otimes Q^{\otimes n_0}\otimes Z\otimes\cdots\otimes
  Q^{\otimes n_{i-1}}\otimes Z \otimes Q^{\otimes n_i}\otimes Y\subseteq \CB_{i+j}(X,Z,Y)
\] 
on the bar complex $\CB_\ast(X,Z,Y)$. Since $d(F^i_{i+j}) \subseteq F^i_{i+j-1}$, the bar complex $\CB_\ast(X,Z,Y)$ becomes a filtered differential complex; whose associated differential graded complex is
\[ 
E^0_{i,j} = F^{i}_{i+j}/F^{i-1}_{i+j} = \bigoplus_{n_0+\cdots+n_i=j}
   X\otimes Q^{\otimes n_0}\otimes P\otimes\cdots\otimes
   Q^{\otimes n_{i-1}}\otimes P \otimes Q^{\otimes n_i}\otimes Y,
\]
together with the induced differentials $d_0:E^0_{i,j} \lra E^0_{i,j-1}$ coming from the bar complex $\CB_*(X,Z,Y)$.  As in the case of our first filtration, one can view the resulting complex as a graded multi-product of bar complexes
\begin{equation}
  \label{CB-d_0-II}
  \CB_*(X,Q,P)\otimes_P \underbrace{\CB_*(P,Q,P)\otimes_P \cdots\otimes_P \CB_*(P,Q,P)}_{i\text{-times}} \otimes_Q \CB_*(P,Q,Y).
\end{equation}
In view of the assumption (that $P$ is flat as a $Q$-module), the $E^1$-term is given by
\[
  E^1_{i,j} = H_{i+j}(E^0_{i,\ast}; d_0)
  = \begin{cases}
    \tor^Q_j(X \ot_Q \underbrace{P\otimes_Q \ldots \ot_Q P}_{i\text{-times}},Y) & \text{if $P$ is flat as a left $Q$-module},\\
    \tor^Q_j(X, \underbrace{P\otimes_Q \ldots \ot_Q P}_{i\text{-times}}\otimes_Q Y)  & \text{if $P$ is flat as a right $Q$-module}.
  \end{cases}
\]
Keeping in mind that this spectral sequence converges to the $\tor$-groups of the mapping cylinder algebra $Z$, which are in turn identified with the $\tor$-groups of the algebra $P$ in the previous subsection, we obtain the result which may be summarized in the following proposition.

\begin{proposition}\label{prop-E1}
Given two algebras $P$ and $Q$, together with the left (resp. right) flat algebra morphism $\vp:Q\to P$, let $X$ a right $P$-module and $Y$ be a left $P$-module. Then, there is a spectral sequence such that
\begin{align*}
& E^1_{i,j} = \tor^Q_j(X \ot_Q \underbrace{P\otimes_Q \ldots \ot_Q P}_{i\text{-times}},Y) \Rightarrow \tor_{i+j}^P(X,Y), \\
& \hspace{2cm} \Big(\text{ resp. }\, E^1_{i,j} = \tor^Q_j(X, \underbrace{P\otimes_Q \ldots \ot_Q P}_{i\text{-times}} \otimes_Q Y) \Rightarrow \tor_{i+j}^P(X,Y) \Big).
\end{align*}
\end{proposition}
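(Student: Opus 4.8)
The plan is to run the spectral sequence of the filtered complex $(\CB_*(X,Z,Y), F^\bullet)$ just introduced and to read off its $E^1$-page using the flatness hypothesis. First I would record that the filtration is exhaustive and, in each total degree $m$, finite: since $i$ counts the genuine $Z$-tensor factors we have $0 = F^{-1}_m \subseteq F^0_m \subseteq \cdots \subseteq F^m_m = \CB_m(X,Z,Y)$. A bounded filtration yields a convergent spectral sequence $E^r_{i,j} \Rightarrow H_{i+j}(\CB_*(X,Z,Y)) = \tor^Z_{i+j}(X,Y)$, and by Proposition~\ref{prop:HomologyCylinderAlgebra} the abutment is $\tor^P_{i+j}(X,Y)$. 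Thus everything reduces to identifying the $E^1$-term.

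Next I would pin down the $E^0$-differential. Passing to the associated graded replaces each $Z = P\oplus Q$ by $Z/Q \cong P$, producing the displayed $E^0_{i,j}$ with exactly $i$ interior copies of $P$. Because $d_0$ must preserve the filtration degree $i$, it can only involve those summands of the bar differential that leave the number of $P$-factors unchanged: the multiplication $Q\otimes Q\to Q$ together with the $\vp$-twisted actions $P\otimes Q\to P$, $Q\otimes P\to P$, $X\otimes Q\to X$ and $Q\otimes Y\to Y$; the product $P\otimes P\to P$ strictly lowers the filtration and so is absent from $d_0$. Consequently $(E^0_{i,*}, d_0)$ is precisely the $\otimes$-product of two-sided $Q$-bar complexes in \eqref{CB-d_0-II}, with the $i$ interior copies of $P$ serving as $Q$-bimodule coefficients.

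The heart of the argument is the computation of $E^1 = H(E^0; d_0)$, and this is where the flatness of $P$ over $Q$ enters. Each factor $\CB_*(X,Q,P)$ and $\CB_*(P,Q,P)$ is a complex of free $P$-modules, and since $\tor^Q_{>0}(-,P) = 0$ by left flatness, each such complex is a resolution of its degree-zero homology, namely $X\otimes_Q P$ and $P\otimes_Q P$ respectively. Being bounded-below complexes of flat (indeed free) $P$-modules, tensoring with them over $P$ carries quasi-isomorphisms to quasi-isomorphisms, so I may replace every interior factor by its $H_0$ without changing the homology of the product. Collapsing the resulting $P\otimes_P P$'s then identifies the homology of $E^0_{i,*}$ with that of $\CB_*\!\big(X\otimes_Q \underbrace{P\otimes_Q\cdots\otimes_Q P}_{i}, Q, Y\big)$, that is, with $\tor^Q_j(X\otimes_Q P^{\otimes_Q i}, Y)$. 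This is the asserted $E^1$-term, and the right-flat statement follows from the mirror-image filtration and collapse.

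I expect the only genuine obstacle to be the interchange in the preceding paragraph: justifying that each interior bar complex may be replaced by its zeroth homology \emph{inside} the iterated $P$-tensor product. This is exactly where flatness is indispensable, and it is used twice — once to force the vanishing of the higher groups $\tor^Q_{>0}(-,P)$ so that the interior blocks are concentrated in degree $0$, and once (through the fact that bounded-below complexes of free $P$-modules are $K$-flat) to ensure both that $-\otimes_P-$ preserves the relevant quasi-isomorphisms and that the iterated products $X\otimes_Q P\otimes_Q\cdots\otimes_Q P$ are the correct \emph{underived} coefficient modules. Granting this, convergence of the bounded filtration together with the identification $\tor^Z\cong\tor^P$ from Proposition~\ref{prop:HomologyCylinderAlgebra} completes the proof.
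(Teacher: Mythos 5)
Your proposal follows essentially the same route as the paper: the second filtration $F^\bullet$ on $\CB_*(X,Z,Y)$, the identification of $(E^0_{i,*},d_0)$ with the multi-product of two-sided $Q$-bar complexes in \eqref{CB-d_0-II}, the use of flatness to compute $E^1$, and the identification of the abutment via Proposition~\ref{prop:HomologyCylinderAlgebra}. In fact you supply more detail than the paper does at the one place it is terse --- the collapse of the interior factors $\CB_*(P,Q,P)$ to $P\otimes_Q P$ inside the iterated $\otimes_P$-product, justified by flatness plus the freeness of the bar complexes as $P$-modules --- and that argument is correct.
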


\subsection{A second filtration on the Hochschild homology}\label{Subsect-Hochschild-E1-E2}

We can present the arguments of the previous subsection in terms of the Hochschild homology as well.

To this end, we begin with the increasing filtration
\[ 
F^i_{j+i} =  \sum_{n_0+\cdots+n_i= j}
  M \ot Q^{\otimes n_0}\otimes Z\otimes\cdots\otimes
  Q^{\otimes n_{i-1}}\otimes Z \otimes Q^{\otimes n_i},
  \] 
  that satisfies $b(F_{i+j}^i) \subseteq F_{i+j-1}^i$. Then the associated graded complex is
  \[ 
  E^0_{i,j} = F^{i}_{j+i}/F^{i-1}_{j+i} = \bigoplus_{n_0+\cdots+n_i=j}
  M \ot   Q^{\otimes n_0}\otimes P\otimes\cdots\otimes
  Q^{\otimes n_{i-1}}\otimes P \otimes Q^{\otimes n_i}.
  \]
  Passing to the homology with respect to $b_0:E^0_{i,j} \lra E^0_{i,j-1}$, an analogue of \eqref{CB-d_0-II}, we arrive at the first page of the spectral sequence which is given by 
  \[
E^1_{i,j} = H_{i+j}(E^0_{i,\ast}; b_0) = H_j\Big(Q, M\ot_Q \underbrace{P\ot_Q \cdots \ot_Q P}_{i\text{-times}}\Big) 
  \]
that converges to  $H_\ast(Z,M)$ which is isomorphic to $H_\ast(P,M)$ by Proposition~\ref{prop:HochschildCylinderAlgebra}. Thus, we have proved the following proposition.
 
\begin{proposition}\label{prop:HochschildSS}
 Given two algebras $P$ and $Q$, together with the left (resp. right) flat algebra morphism $\vp:Q\to P$, let $M$ be a $P$-bimodule.  Then there is a spectral sequence whose first page is given by
  \[ E^1_{i,j} = H_j(Q, M\ot_Q \underbrace{P\ot_Q \cdots \ot_Q P}_{i\text{-times}}) \]
  that converges to the Hochschild homology $H_\ast(P,M)$.
\end{proposition}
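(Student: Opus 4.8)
The plan is to obtain the spectral sequence as the one associated with a filtration of the Hochschild complex of the mapping cylinder algebra $Z:=P\oplus Q$, and then to transport its abutment across the isomorphism $H_\ast(Z,M)\cong H_\ast(P,M)$ provided by Proposition~\ref{prop:HochschildCylinderAlgebra}. First I would filter $\CH_\ast(Z,M)$ by the increasing filtration $F^i_{i+j}$ that records the number $i$ of tensor slots holding a full copy of $Z$, permitting arbitrary $Q$-tensor powers of total length $j$ to occupy the remaining slots. The initial verification is that the Hochschild boundary is filtered, $b(F^i_{i+j})\subseteq F^i_{i+j-1}$; this follows by inspecting the face maps and noting that the product on $Z$ never manufactures a new $Z$-slot out of $Q$-slots, so $b$ can only preserve the separator count $i$ while decreasing the internal $Q$-length $j$.

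Next I would pass to the associated graded $E^0_{i,j}=F^i_{i+j}/F^{i-1}_{i+j}$. In this quotient the boundary components that multiply two adjacent $Z$-separators, and thereby lower the count $i$, land in $F^{i-1}$ and are killed, while the components in which an interspersed $Q$ is absorbed into a neighbouring $Z$-separator through its $P$-part via $\vp$ survive and encode exactly the $Q$-bimodule structure on $P$. Consequently each $Z$-separator contributes only its $P$-component, the interspersed slots retain their $Q$-powers, and $(E^0_{i,\ast},b_0)$ is the Hochschild analogue of the graded multi-product \eqref{CB-d_0-II}: a cyclic arrangement of $Q$-bar complexes in which the $i$ surviving copies of $P$, together with $M$, serve as coefficients.

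The crux — and the step I expect to be the main obstacle — is the identification of the $E^1$-page, where the flatness hypothesis on $\vp:Q\to P$ does the real work. Each intermediate block $\CB_\ast(P,Q,P)$ occurring in the multi-product computes $\tor^Q_\ast(P,P)$; when $P$ is flat as a (say left) $Q$-module this Tor is concentrated in degree zero and equals $P\ot_Q P$, so every intermediate block collapses onto its $0$-th homology and contributes no higher-degree classes. A Künneth-type acyclicity argument, again licensed by flatness, then collapses the whole multi-product and shows that the $b_0$-homology of $E^0_{i,\ast}$ is precisely the Hochschild homology of $Q$ with coefficients in the assembled bimodule, namely $H_j\big(Q,\,M\ot_Q\underbrace{P\ot_Q\cdots\ot_Q P}_{i\text{-times}}\big)$. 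The bookkeeping to watch is the side on which flatness is invoked: in the left-flat case the copies of $P$ and the coefficient $M$ group together as written, whereas the right-flat case forces the symmetric grouping, accounting for the two variants already recorded in Proposition~\ref{prop-E1}.

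Finally, the filtration is exhaustive and, in each total degree $n$, bounded (there are at most $n$ possible separators), so the associated spectral sequence converges to $H_\ast\big(\CH_\ast(Z,M)\big)=H_\ast(Z,M)$. Invoking Proposition~\ref{prop:HochschildCylinderAlgebra} rewrites the abutment as $H_\ast(P,M)$, yielding $E^1_{i,j}=H_j\big(Q,\,M\ot_Q\underbrace{P\ot_Q\cdots\ot_Q P}_{i\text{-times}}\big)\Rightarrow H_{i+j}(P,M)$, exactly as claimed.
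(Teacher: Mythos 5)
Your proposal is correct and follows essentially the same route as the paper: filter $\CH_\ast(Z,M)$ of the mapping cylinder algebra $Z=P\oplus Q$ by the number of $Z$-slots, identify the associated graded with the Hochschild analogue of the multi-product \eqref{CB-d_0-II}, use flatness to collapse the intermediate blocks and obtain the stated $E^1$-page, and transport the abutment via Proposition~\ref{prop:HochschildCylinderAlgebra}. In fact you spell out the flatness/K\"unneth collapse at the $E^1$-identification more explicitly than the paper does, which is a welcome addition rather than a deviation.
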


\section{Quantum Linear Groups}\label{QLG}

\subsection{The algebra of quantum matrices $M_q(n,m)$}

Let $n$ and $m$ be two positive integers.  Following
\cite[Lemma 2.10]{HangYacobi:QuantumPolynomialFunctors}, we define $M_q(n,m)$ as
the associative algebra on $nm$ generators $x_{ij}$ where
$1\leq i\leq n$ and $1\leq j\leq m$.  These generators are subject to
the following relations
\begin{align}
  x_{j \ell } x_{i \ell } = & q \,x_{i \ell } x_{j \ell}
  \quad\text{ for all } 1\leq i < j\leq n\text{ and } 1\leq \ell \leq m,\label{quant-mat-I}\\
  x_{\ell j}x_{\ell i}  = & q\,x_{\ell i}x_{\ell j}
  \quad\text{ for all } 1\leq i < j\leq m\text{ and } 1\leq \ell\leq n,\label{quant-mat-II}\\
  x_{\ell i}x_{kj} = & x_{kj}x_{\ell i}
  \quad\text{ for all } 1\leq k < \ell \leq n\text{ and } 1\leq i<j\leq m,\label{quant-mat-III}\\
  x_{ki}x_{\ell j} - x_{\ell j}x_{ki} = & (q^{-1}-q)\, x_{kj}x_{\ell i}
    \quad\text{ for all } 1\leq k < \ell \leq n\text{ and } 1\leq i<j\leq m.\label{quant-mat-IV}
\end{align}
For convenience, we are going to use $M_q(n)$ for $M_q(n,n)$.  We also
use the following convention: for $a\leq n$ and $b\leq m$ when we
write $M_q(a,b)\subseteq M_q(n,m)$ we mean that we use the subalgebra
generated by $x_{ij}$ for $1\leq i\leq a$ and $1\leq j\leq b$ in
$M_q(n,m)$.  Notice that these generators are subject to the same
relations, and therefore, the canonical map $M_q(a,b)\to M_q(n,m)$ is
injective.

It follows from \eqref{quant-mat-I} and \eqref{quant-mat-II} that all column or row subalgebras
\begin{equation}
  \Cl_\ell :=\langle x_{i\ell}\mid 1 \leq i \leq n\rangle \qquad
  \Rw_\ell :=\langle x_{\ell j}\mid 1 \leq j \leq n \rangle
\end{equation}
are isomorphic to the quantum affine $n$-space $k_q^n$ which is defined as the $k$-algebra
\begin{equation}
  \label{eq:quantum-affine-space}
   M_q(1,n) \cong M_q(n,1) \cong k_q^n := k\{x_1, \ldots, x_n\}/\langle x_jx_i-q\,x_jx_i\mid i<j\rangle.
\end{equation}
See \cite[Subsect. 3.1]{GuccGucc97} for the multiparametric version.
Next, we note from \cite[Thm. 3.5.1]{ParshallWang:QuantumLinearGroups}
and \cite[Prop. 9.2.6]{KlimSchm-book} that
\[
\C{B} = \left \{\prod_{1 \leq i,j \leq n}x_{ij}^{t_{ij}}\mid t_{ij} \geq 0\right\}
\]
is a vector space basis of $M_q(n)$, with respect to any fixed order of the generators. 

\subsection{The bialgebra structure on $M_q(n)$}

The algebra $M_q(n)$ of quantum matrices is a bialgebra whose comultiplication $\D\colon M_q(n) \to M_q(n) \ot M_q(n)$ is given by
\[ \D(x_{ij}) := \sum_k x_{ik} \ot x_{kj} \]
and whose counit $\ve: M_q(n) \to k$ is given by
\[ \ve(x_{ij}) = \d_{ij}. \]

\subsection{The quantum determinant}

Let $S_n$ be the group of permutations of the set $\{1,\ldots,n\}$, and let $\ell(\s) \in \B{N}$ be the length of $\s \in S_n$. Let also $I:=\{i_1, \ldots, i_m\}$ and $J:=\{j_1, \ldots, j_m\}$ be two subsets of $\{1,\ldots,n\}$ such that $i_1 < \ldots < i_m$ and $j_1 < \ldots < j_m$. Then, the element 
\[
\C{D}_{IJ} := \sum_{\sigma\in S_m}\,(-q)^{\ell(\sigma)}\, x_{i_{\sigma(1)}j_1} \ldots x_{i_{\sigma(m)}j_m}  = \sum_{\sigma\in S_m}\,(-q)^{\ell(\sigma)}\, x_{i_1j_{\sigma(1)}} \ldots x_{i_mj_{\sigma(m)}} \in M_q(m) 
\]
is called the \emph{quantum m-minor determinant} as defined in~\cite[Sect. 9.2.2]{KlimSchm-book} and \cite[Sect. 4.1]{ParshallWang:QuantumLinearGroups}.

On one extreme we have $\C{D}_{IJ} = x_{ij}$  for $I=\{i\}$ and $J=\{j\}$.  On the other extreme, if we let $I = J = \{1,\ldots,n\}$ we get the \emph{quantum determinant} which is denoted by $\C{D}_q$. 
The quantum determinant is in the center $M_q(n)$. Moreover, if $q$ is not a root of unity, then the center of $M_q(n)$ is generated by the quantum determinant.  For this result see \cite[Prop. 9.9]{KlimSchm-book}, \cite[Thm. 4.6.1]{ParshallWang:QuantumLinearGroups}, or \cite{NoumYamaMima93}.

\subsection{The quantum general linear group $GL_q(n)$}

The quantum group $GL_q(n)$ is obtained by adjoining $\C{D}_q^{-1}$ to the bialgebra $M_q(n)$. More precisely,
\[
GL_q(n) = \frac{M_q(n)[t]}{\langle t\C{D}_q - 1\rangle}.
\]
Let us note from this definition that $M_q(n)$ is a subalgebra of $GL_q(n)$. In terms of generators and relations, $GL_q(n)$ is the algebra generated by $n^2 + 1$ generators $x_{ij}$ and $t$ with $ i,j\in\{1,\ldots, n\}$, satisfying the same relations as \eqref{quant-mat-I} - \eqref{quant-mat-IV}, and 
\begin{align}
& \C{D}_qt = t\C{D}_q = 1, \label{GLq-relations-I} \\
& x_{ij}t = t x_{ij}. \label{GLq-relations-II}
\end{align}
On the other hand, $GL_q(n)$ is the localization $M_q(n)_{\C{D}_q}$ of $M_q(n)$ with respect to $\C{D}_q$ as in \cite[Sect. 5.3]{ParshallWang:QuantumLinearGroups}, and \cite[Prop. 1.1.17]{Loday-book}. As such, the bialgebra structure on $M_q(n)$ extends uniquely to $GL_q(n)$~\cite[Lemma 5.3.1]{ParshallWang:QuantumLinearGroups}. Furthermore, $GL_q(n)$ is a Hopf algebra with the antipode $S\colon GL_q(n) \to GL_q(n)$ given by
\[
 S(x_{ij}) := (-q)^{j-i}A_{ji}\C{D}_q^{-1}, \quad S(\C{D}_q^{-1}) : = \C{D}_q,
\]
where $A_{ij} := \C{D}_{IJ}$ with $I= \{1,\ldots,n\}-\{i\}$, and $J= \{1,\ldots,n\}-\{j\}$. The matrix 
\[
\left[q^{i-j}A_{ij} \right]_{1\leq i,j\leq n} 
\]
is called the \emph{quantum cofactor matrix} of $\left[x_{ij}\right]_{1\leq i,j \leq n}$, and the Hopf algebra $GL_q(n)$ is called the \emph{quantum general linear group}.

\subsection{The quantum special linear group $SL_q(n)$}

Next, we recall briefly the quantum version of the special linear group. It is given as the quotient space
\[
SL_q(n):= \frac{GL_q(n)}{\langle \C{D}_q-1\rangle} = \frac{M_q(n)}{\langle \C{D}_q-1\rangle},
\]
which happens to be a Hopf algebra with the bialgebra structure induced from $GL_q(n)$, or from $M_q(n)$, and the antipode $S\colon SL_q(n) \to SL_q(n)$ is given by
\[ S(x_{ij}) := q^{j-i}A_{ji} \]
induced from $GL_q(n)$. The Hopf algebra $SL_q(n)$ is called the \emph{quantum special linear group}.

Actually, one can write $GL_q(n)$ as a direct product of $SL_q(n)$ and
the Laurent polynomial ring over the quantum determinant
$k[\C{D}_q,\C{D}_q^{-1}]$.
\begin{proposition}{\cite[Proposition]{LevaStaff93}}\label{LS:SLNq}
  There is an isomorphism of algebras of the form
  \[ GL_q(n) \cong SL_q(n)\otimes k[\C{D}_q,\C{D}_q^{-1}]. \]
\end{proposition}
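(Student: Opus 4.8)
The plan is to realise the statement as an instance of the following elementary graded-splitting principle: if $A=\bigoplus_{m\in\Zb}A_m$ is a $\Zb$-graded algebra containing a central, invertible, homogeneous element $d$ of degree $1$, then the multiplication map $A_0\otimes k[d,d^{-1}]\to A$, $a_0\otimes d^m\mapsto a_0\,d^m$, is an isomorphism of algebras, and moreover $A_0\cong A/\langle d-1\rangle$. The entire difficulty is thus to equip $GL_q(n)$ with a grading in which the quantum determinant $\C{D}_q$ is homogeneous of degree exactly $1$. Note that the obvious total grading $\deg x_{ij}=1$ assigns $\C{D}_q$ degree $n$, and for that grading the splitting would spuriously demand an $n$-th root of $\C{D}_q$; so the choice of grading is the crux of the argument.

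First I would introduce on $M_q(n)$ the \emph{first-row grading} $\partial$ that counts the number of tensor factors coming from the first row, i.e.\ $\partial(x_{1j})=1$ for $1\leq j\leq n$ and $\partial(x_{ij})=0$ for $i\geq 2$. A direct inspection shows that each of the defining relations \eqref{quant-mat-I}--\eqref{quant-mat-IV} is $\partial$-homogeneous: in every relation the (multiset of) row indices occurring is the same on both sides, so the number of first-row factors is preserved; in particular the inhomogeneous-looking relation \eqref{quant-mat-IV} has all three of its monomials supported on the same pair of rows $\{k,\ell\}$. Hence $\partial$ is a well-defined $\Nb$-grading of $M_q(n)$. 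Since every monomial of $\C{D}_q=\sum_{\sigma\in S_n}(-q)^{\ell(\sigma)}\,x_{\sigma(1)1}\cdots x_{\sigma(n)n}$ contains exactly one factor whose row index equals $1$, the determinant $\C{D}_q$ is $\partial$-homogeneous of degree $1$.

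Next I would transport $\partial$ to $GL_q(n)=M_q(n)_{\C{D}_q}$. Because $\C{D}_q$ is central and $\partial$-homogeneous, the localisation inherits a $\Zb$-grading; concretely one sets $\partial(\C{D}_q^{-1})=-1$ and checks that the relations \eqref{GLq-relations-I}--\eqref{GLq-relations-II} remain homogeneous. In this grading $\C{D}_q\in GL_q(n)_1$ is central and invertible, so the principle above applies with $A=GL_q(n)$ and $d=\C{D}_q$, yielding $GL_q(n)\cong GL_q(n)_0\otimes k[\C{D}_q,\C{D}_q^{-1}]$ together with the identification $GL_q(n)_0\cong GL_q(n)/\langle\C{D}_q-1\rangle=SL_q(n)$. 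Composing the two isomorphisms gives the claim. The one genuinely nontrivial step is the choice of grading; verifying the general principle and the homogeneity of the relations is then routine, the former amounting to the fact that the powers $\C{D}_q^{\,m}$ form an $A_0$-basis of $GL_q(n)$ because $GL_q(n)_m=GL_q(n)_0\,\C{D}_q^{\,m}$ for every $m\in\Zb$.
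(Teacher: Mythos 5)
Your argument is correct: the first-row grading is indeed preserved by relations \eqref{quant-mat-I}--\eqref{quant-mat-IV} (each relation involves the same multiset of row indices on both sides), $\C{D}_q$ is homogeneous of degree $1$ for it, and the graded-splitting principle with the retraction $\sum_m a_m\mapsto\sum_m a_m\C{D}_q^{-m}$ identifying $GL_q(n)_0$ with $GL_q(n)/\langle\C{D}_q-1\rangle$ is sound. The paper itself gives no proof --- it simply cites \cite[Proposition]{LevaStaff93} --- and your grading argument is essentially the one used in that reference, so there is nothing further to compare.
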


\subsection{Modular pairs of involution for $GL_q(n)$ and $SL_q(n)$}

Finally, we are going to see that both Hopf algebras $GL_q(n)$ and $SL_q(n)$
admit a \emph{modular pair in involution} (MPI).  Let us recall
from~\cite{ConnMosc99} that an Hopf algebra $H$ is said to admit an MPI if there
is an algebra homomorphism $\d\colon H \to k$ and a group-like element
$\s \in H$ such that
\[
  \d(\s) = 1 \text{ and } S_\d^2(h) = \s h \s^{-1}
  \text{ where } S_\d(h):= \d(h_{(1)})S(h_{(2)})
\]
for any $h \in H$.

\begin{proposition}\label{MPI-GL-SL}
  Let $f_{q,n}^{-1}:GL_q(n)\to k$ (resp. $f_{q,n}^{-1}\colon SL_q(n)\to k$) be
  given by $f_{q,n}^{-1}(x_{ij}) := \d_{ij}q^{(n+1)-2i}$. Then,
  $(f_{q,n}^{-1},1)$ is a MPI for the Hopf algebra $GL_q(n)$, (resp. for
  the Hopf algebra $SL_q(n)$.)
\end{proposition}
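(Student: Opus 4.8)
The plan is to reduce the statement to two checks: that $f := f_{q,n}^{-1}$ is a well-defined algebra homomorphism (a character), and that the twisted antipode $S_f(h) := f(h_{(1)}) S(h_{(2)})$ squares to the identity. Indeed, once $f$ is known to be a character we get $f(\s) = f(1) = 1$ for free, so the first MPI condition $\d(\s) = 1$ holds trivially with $\s = 1$; and since $\s = 1$ the second condition $S_\d^2(h) = \s h \s^{-1}$ collapses to exactly $S_f^2 = \Id$.

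First I would verify that $f$ is well defined, i.e. that it kills relations \eqref{quant-mat-I}--\eqref{quant-mat-IV} (and the extra relations of $GL_q(n)$). Because $f(x_{ij}) = \d_{ij} q^{(n+1)-2i}$ vanishes off the diagonal and lands in the commutative field $k$, each relation is annihilated almost formally: in \eqref{quant-mat-I}--\eqref{quant-mat-II} at least one factor on each side is off-diagonal, hence $0$; \eqref{quant-mat-III} becomes a commutativity statement in $k$; and in \eqref{quant-mat-IV} the left side vanishes by commutativity while on the right the constraints $k < \ell$ and $i < j$ prevent $k = j$ and $\ell = i$ from holding simultaneously, so one factor is again $0$. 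The one genuinely arithmetic point is that extending $f$ to $GL_q(n)$ (resp. being consistent with $\C{D}_q = 1$ on $SL_q(n)$) forces $f(t) = f(\C{D}_q)^{-1}$, so I must compute $f(\C{D}_q)$. Only the identity permutation contributes a nonzero diagonal product, giving $f(\C{D}_q) = \prod_{k=1}^n q^{(n+1)-2k} = q^{\sum_{k=1}^n ((n+1) - 2k)} = q^0 = 1$; this telescoping to $0$ is precisely where the exponent $(n+1)-2i$ is engineered, and it makes $f$ compatible with $t\C{D}_q = 1$.

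Next I would record that $S_f$ is an algebra anti-homomorphism: $S_f(hg) = f(h_{(1)})f(g_{(1)}) S(g_{(2)})S(h_{(2)}) = S_f(g)S_f(h)$, using that $f$ is a character, $S$ an anti-homomorphism, and scalars commute. Consequently $S_f^2$ is an algebra endomorphism, so it suffices to check $S_f^2 = \Id$ on the generators $x_{ij}$ (and $t$ for $GL_q(n)$). On generators $S_f(x_{ij}) = q^{(n+1)-2i} S(x_{ij})$. To iterate I need the convolution inverse $f^{-1} = f \circ S$; evaluating $f * f^{-1} = \ve$ on $x_{ij}$ gives $q^{(n+1)-2i} f^{-1}(x_{ij}) = \d_{ij}$, whence $f^{-1}(x_{ij}) = \d_{ij} q^{2i-(n+1)}$. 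Feeding $\D(S(x_{ij})) = \sum_k S(x_{kj}) \ot S(x_{ik})$ into the definition then yields $S_f^2(x_{ij}) = q^{(n+1)-2i}\, q^{2j-(n+1)}\, S^2(x_{ij}) = q^{2(j-i)} S^2(x_{ij})$.

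The computation closes once we know that the square of the antipode acts diagonally as $S^2(x_{ij}) = q^{2(i-j)} x_{ij}$, the standard fact for quantum linear groups (Klimyk--Schmüdgen, Parshall--Wang), since then $S_f^2(x_{ij}) = q^{2(j-i)} q^{2(i-j)} x_{ij} = x_{ij}$; for $GL_q(n)$ the remaining generator is handled by noting $t = \C{D}_q^{-1}$ is group-like with $S(\C{D}_q) = \C{D}_q^{-1}$ and $f(\C{D}_q) = 1$, giving $S_f^2(t) = t$ directly. I expect the main obstacle to be establishing $S^2(x_{ij}) = q^{2(i-j)} x_{ij}$ from scratch: via the explicit antipode $S(x_{ij}) = (-q)^{j-i} A_{ji}\C{D}_q^{-1}$ this amounts to computing the antipode of the cofactor minors $A_{ji}$ and re-expanding, a combinatorial Laplace-type identity that I would rather cite than re-derive. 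Everything else is bookkeeping with Sweedler notation and the diagonal/vanishing structure of $f$.
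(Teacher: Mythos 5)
Your overall strategy is the same as the paper's: establish that $f:=f_{q,n}^{-1}$ is a character and then reduce the involutivity of the twisted antipode to the known description of $S^2$ on quantum linear groups. The paper does this entirely in the convolution algebra, quoting $S^2=f_{q,n}^{-1}\ast\Id\ast f_{q,n}$ from Parshall--Wang and cancelling; your generator-by-generator computation $S_f^2(x_{ij})=q^{(n+1)-2i}q^{2j-(n+1)}S^2(x_{ij})$ is exactly the identity $S_f^2=f\ast S^2\ast f^{-1}$ evaluated on $x_{ij}$, and your preliminary steps (killing the relations, $f(\C{D}_q)=1$, anti-multiplicativity of $S_f$ reducing the check to generators) are all correct and somewhat more self-contained than the paper, which simply cites the character property.

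The one step at genuine risk is the input $S^2(x_{ij})=q^{2(i-j)}x_{ij}$. With the paper's conventions the sign comes out the other way: the explicit antipode $S(x_{ij})=(-q)^{j-i}A_{ji}\C{D}_q^{-1}$ gives, already for $n=2$, $S^2(x_{12})=q^{2}x_{12}$, i.e. $S^2(x_{ij})=q^{2(j-i)}x_{ij}$, and the same sign follows from the quoted identity $S^2=f_{q,n}^{-1}\ast\Id\ast f_{q,n}$ since $f_{q,n}^{-1}(x_{ii})f_{q,n}(x_{jj})=q^{2(j-i)}$. Combined with your (correct) $S_f^2(x_{ij})=q^{2(j-i)}S^2(x_{ij})$ this yields $q^{4(j-i)}x_{ij}$ rather than $x_{ij}$; under the Connes--Moscovici convention $S_\d=\d\ast S$ that you and the displayed MPI definition both use, the character that works with the paper's antipode is then $f_{q,n}$, not $f_{q,n}^{-1}$. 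This is a convention wobble rather than a conceptual error --- the paper itself defines $S_\d(h)=\d(h_{(1)})S(h_{(2)})$ but then computes in the proof of Proposition~\ref{MPI-GL} with the opposite twisting $\widetilde S_\d=S\ast\d$, for which $\widetilde S_\d^{\,2}=\d^{-1}\ast S^2\ast\d$ and $\d=f_{q,n}^{-1}$ does give the identity --- but as written your two exponents do not cancel under the paper's stated definitions. You need to fix one consistent convention for the twisted antipode and verify the sign of $S^2(x_{ij})$ against the actual antipode formula in use before asserting the final cancellation.
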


\begin{proof}
  We will give the proof for $GL_q(n)$.  The proof for the case of
  $SL_q(n)$ is similar, and therefore, is omitted. It is given in
  \cite[Lemma 5.4.1]{ParshallWang:QuantumLinearGroups} that $f_{q,n}\colon GL_q(n) \to k$ given by 
  \[ f_{q,n}(x_{ij}) = \d_{ij}q^{2i-(n+1)} \] is an algebra homomorphism,
  i.e. a character.  Then, its convolution inverse $f^{-1}_{q,n}\colon GL_q(n) \to k$ is also a character.  The claim then
  follows from the observation that
  \[ \widetilde{S}^2_{f_{q,n}^{-1}} = f_{q,n} \ast S^2 \ast f_{q,n}^{-1} \] since
  $S^2 = f_{q,n}^{-1} \ast \Id \ast f_{q,n}$ by
  \cite[Thm. 5.4.2]{ParshallWang:QuantumLinearGroups} where $\ast$ is
  the convolution multiplication on the set of characters of Hopf
  algebras.
\end{proof}

As for $GL_q(n)$, there is a second choice of MPI.
\begin{proposition}\label{MPI-GL}
  Let $f_{q,n}^{-1}:GL_q(n)\to k$ be as before, and let
  $\C{D}_q^{-1} \in GL_q(n)$ be the quantum determinant. Then,
  $(f_{q,n}^{-1},\C{D}_q^{-1} )$ is a modular pair in involution for the Hopf
  algebra $GL_q(n)$.
\end{proposition}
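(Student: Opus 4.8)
The plan is to deduce this from the modular pair in involution $(f_{q,n}^{-1},1)$ already established in Proposition~\ref{MPI-GL-SL}, exploiting that the two candidate pairs share the same character $f_{q,n}^{-1}$ and differ only in their group-like element, which here is \emph{central}. Concretely I must check three things: that $\C{D}_q^{-1}$ is group-like, that the normalization $f_{q,n}^{-1}(\C{D}_q^{-1})=1$ holds, and that $S_{f_{q,n}^{-1}}^2(h)=\C{D}_q^{-1}\,h\,\C{D}_q$ for every $h\in GL_q(n)$.

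First I would record that $\C{D}_q^{-1}$ is group-like. Since $\D(\C{D}_q)=\C{D}_q\ot\C{D}_q$ and $\ve(\C{D}_q)=1$, the quantum determinant is group-like; as it is invertible in $GL_q(n)$, its inverse is group-like as well, with $\D(\C{D}_q^{-1})=\C{D}_q^{-1}\ot\C{D}_q^{-1}$. For the normalization I would evaluate the character on the expansion $\C{D}_q=\sum_{\s\in S_n}(-q)^{\ell(\s)}\,x_{\s(1)1}\cdots x_{\s(n)n}$. Because $f_{q,n}^{-1}(x_{ij})=\d_{ij}\,q^{(n+1)-2i}$ is supported on the diagonal, multiplicativity kills every monomial with $\s\neq\mathrm{id}$ and leaves only the identity term, whose exponent is $\sum_{k=1}^n\big((n+1)-2k\big)=n(n+1)-n(n+1)=0$. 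Hence $f_{q,n}^{-1}(\C{D}_q)=1$, and applying the character to $\C{D}_q^{-1}\C{D}_q=1$ gives $f_{q,n}^{-1}(\C{D}_q^{-1})=1$.

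It remains to verify the involution identity, and this is where the reduction to Proposition~\ref{MPI-GL-SL} does the work. That proposition gives that $(f_{q,n}^{-1},1)$ is an MPI, so the twisted square of the antipode is already trivial, $S_{f_{q,n}^{-1}}^2=\Id$. On the other hand, the quantum determinant lies in the center of $M_q(n)$, hence in the center of its localization $GL_q(n)$, so conjugation by $\C{D}_q^{-1}$ is the identity map: $\C{D}_q^{-1}\,h\,\C{D}_q=h$ for all $h$. Combining the two gives $S_{f_{q,n}^{-1}}^2(h)=h=\C{D}_q^{-1}\,h\,\C{D}_q$, which is exactly the MPI condition for the pair $(f_{q,n}^{-1},\C{D}_q^{-1})$.

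I do not anticipate a real obstacle: each ingredient is either a standard structural fact or a one-line computation, the only explicit calculation being $f_{q,n}^{-1}(\C{D}_q)=1$, which collapses at once from the diagonal support of the character. The conceptual point, which makes the statement nearly immediate once the first MPI is in hand, is that replacing the group-like element $1$ by the central group-like element $\C{D}_q^{-1}$ leaves the conjugation action --- and therefore the involution condition --- untouched; the genuinely new content is merely the normalization $f_{q,n}^{-1}(\C{D}_q^{-1})=1$.
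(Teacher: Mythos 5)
Your proof is correct and takes essentially the same route as the paper's: both arguments rest on the identity $S^2 = f_{q,n}^{-1}\ast\Id\ast f_{q,n}$ (which you access by quoting Proposition~\ref{MPI-GL-SL} instead of rerunning the convolution computation with $\C{D}_q^{-1}$ built into the twisted antipode), on the centrality of $\C{D}_q$ in $GL_q(n)$, and on the normalization $f_{q,n}^{-1}(\C{D}_q^{-1})=1$ obtained by collapsing the determinant expansion to the identity permutation, exactly as in the paper's computation \eqref{f(D)}. The only differences are organizational: you observe up front that conjugation by a central group-like element is trivial, and you make the group-likeness of $\C{D}_q^{-1}$ explicit where the paper leaves it implicit.
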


\begin{proof}
We have, for any $x\in GL_q(n)$,
\begin{align*}
  \widetilde{S}^2_{f_{q,n}^{-1}}(x)
  = & \widetilde{S}_{f_{q,n}^{-1}}\left(\C{D}_q^{-1}f_{q,n}^{-1}(x_{(2)})S(x_{(1)})\right) = \widetilde{S}_{f_{q,n}^{-1}}(S(x_{(1)}))f_{q,n}^{-1}(x_{(2)}) S(\C{D}_q^{-1}) \\
  = & \C{D}_q^{-1}f_{q,n}^{-1}(S(x_{(1)}))S^2(x_{(2)})f_{q,n}^{-1}(x_{(3)})\C{D}_q \\
  = & \C{D}_q^{-1}x\C{D}_q = x.
\end{align*}
Furthermore, we have
\begin{align}\label{f(D)}
  f_{q,n}(\C{D}_q)
  = & f_{q,n}\left(\sum_{\sigma\in S_n}\,(-q)^{\ell(\sigma)}\, x_{1\sigma(1)} \ldots x_{n\sigma(n)}\right) \nonumber\\
  = & \sum_{\sigma\in S_n}\,(-q)^{\ell(\sigma)}\,f_{q,n}(x_{1\sigma(1)}) \ldots f_{q,n}(x_{n\sigma(n)})\\
  = & q^{n(n+1)-2(1+\cdots + n)} =  1,\nonumber
\end{align}
and hence $f_{q,n}^{-1}(\C{D}_q^{-1}) = 1$.
\end{proof}

\section{Hochschild Homology of $M_q(n)$}\label{sect:MLN}

\subsection{Homology of quantum matrices $M_q(n,m)$}

Given a sequence $(q_1,\ldots,q_n)$
of scalars in $k$, and let $\alpha \colon M_q(n)\to k$ be the character given by
\[ 
\alpha(x_{ij}) = \delta_{ij} q_i. 
\] 
Accordingly, the counit
\[ \ve(x_{ij}) = \delta_{ij}, \]
the characters $f_{q,n}$ and $f_{q,n}^{-1}$ of Proposition~\ref{MPI-GL-SL}, namely,
\[ f_{q,n}(x_{ij}) = \delta_{ij} q^{2i-1-n} \quad\text{ and }\quad f_{q,n}^{-1}(x_{ij}) = \delta_{ij} q^{n-2i+1} \] and finally the
character $\eta\colon M_q(n,m)\to k$ given by
\begin{equation}\label{trivial-character}
  \eta(x_{ij})=0, 
\end{equation}
for $1\leq i\leq n$ and $1\leq j\leq m$ correspond to the sequences
\begin{align}
  \varepsilon & \leftrightarrow (1,\ldots,1),\\
  f_{q,n} & \leftrightarrow (q^{-n+1},q^{-n+3},\ldots,q^{n-1}),\\
  f_{q,n}^{-1} & \leftrightarrow (q^{n-1},q^{n-3},\ldots,q^{-n+1}),\\
  \eta & \leftrightarrow (0,\ldots,0).
\end{align}
Let, now, $\alpha,\beta\colon M_q(n,m)\to k$ be two characters given by
two sequences of scalars as defined above.  Let also
${}_\alpha k_\beta$ denote the $M_q(n,m)$-bimodule $k$ with the
actions given by
\begin{equation}\label{right-Mqn-act-on-k}
  x_{ij} \tr 1 = \alpha(x_{ij}) \quad\text{ and }\quad 1\tl x_{ij} = \beta(x_{ij})
\end{equation}
for any $x_{ij}\in M_q(n,m)$.  In addition, the absence of a subscript such as
${}_\alpha k$ or $k_\alpha$ indicates that the action on the unspecified side is
given by the counit.

The following result gives us the license to consider only the 1-dimensional
bimodules whose right action is given by the counit.

\begin{proposition}\label{prop:skew}
  Given any characters $\alpha$ and $\beta$ defined by a sequence of scalars
  $(q^{a_1},\ldots,q^{a_n})$ and $(q^{b_1},\ldots,q^{b_n})$ as defined above,
  there is an automorphism $\theta_\alpha\colon M_q(n,m)\to M_q(n,m)$ so that
  the action of $M_q(n,m)$ twisted by $\theta_\alpha$ on ${}_\beta k_\alpha$
  reduces to ${}_{\alpha^{-1}\beta} k$.
\end{proposition}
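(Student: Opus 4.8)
The plan is to produce $\theta_\alpha$ as a diagonal rescaling of the generators and then push the two one-sided actions of ${}_\beta k_\alpha$ through it. First I would look for algebra endomorphisms of $M_q(n,m)$ of the form $x_{ij}\mapsto\lambda_{ij}x_{ij}$ with $\lambda_{ij}\in k^\times$, and ask which arrays $(\lambda_{ij})$ respect the defining relations. The point is that relations \eqref{quant-mat-I}--\eqref{quant-mat-III} only permute a \emph{fixed} pair of generators between their two sides, so each side carries the identical scaling factor and the relation is automatically preserved; it is relation \eqref{quant-mat-IV}, which couples the two \emph{distinct} pairs $x_{ki}x_{\ell j}$ and $x_{kj}x_{\ell i}$, that imposes the constraint $\lambda_{ki}\lambda_{\ell j}=\lambda_{kj}\lambda_{\ell i}$ for all $k<\ell$ and $i<j$. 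I would then observe that this is precisely the condition that $(\lambda_{ij})$ be multiplicatively rank one, $\lambda_{ij}=\mu_i\nu_j$, and that any such unit-valued array defines an automorphism, its inverse being the reciprocal rescaling.

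Next I would specialize to the choice dictated by $\alpha$. Writing $\alpha(x_{ij})=\delta_{ij}q^{a_i}$, I would set $\theta_\alpha(x_{ij}):=q^{-a_i}x_{ij}$, i.e.\ $\mu_i=q^{-a_i}$ and $\nu_j=1$, which trivially meets the rank-one condition and is therefore an automorphism. It then remains only to twist both actions of ${}_\beta k_\alpha$ by $\theta_\alpha$ and evaluate on generators. The right action becomes $\alpha\circ\theta_\alpha$, sending $x_{ij}\mapsto q^{-a_i}\delta_{ij}q^{a_i}=\delta_{ij}=\ve(x_{ij})$, so it collapses to the counit; the left action becomes $\beta\circ\theta_\alpha$, sending $x_{ij}\mapsto q^{-a_i}\delta_{ij}q^{b_i}=\delta_{ij}q^{b_i-a_i}$, which is exactly the diagonal character $\alpha^{-1}\beta$ (convolution of diagonal characters adds exponents, so $\alpha^{-1}$ negates them). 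Hence the twisted bimodule is identically ${}_{\alpha^{-1}\beta}k$.

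Finally, to make good on the word ``license'', I would record the standard fact that twisting both actions by a single algebra automorphism does not change Hochschild homology: the assignment $m\ot a_1\ot\cdots\ot a_p\mapsto m\ot\theta_\alpha^{-1}(a_1)\ot\cdots\ot\theta_\alpha^{-1}(a_p)$ is a chain isomorphism, yielding $H_\ast(M_q(n,m),{}_\beta k_\alpha)\cong H_\ast(M_q(n,m),{}_{\alpha^{-1}\beta}k)$. I expect the only genuine obstacle to be the relation-\eqref{quant-mat-IV} check together with the resulting identification of the admissible rescalings as the rank-one arrays; once that is in place the diagonal-in-$i$ choice clears the constraint automatically, and what remains is merely keeping the left/right and convolution conventions straight.
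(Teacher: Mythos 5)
Your proposal is correct and follows essentially the same route as the paper: define $\theta_\alpha(x_{ij})=q^{-a_i}x_{ij}$, check that the defining relations are preserved, and compute the two twisted actions on generators. You merely make explicit two points the paper leaves implicit --- the verification that relation \eqref{quant-mat-IV} is the only one imposing a constraint (satisfied by any row-diagonal rescaling) and the standard chain isomorphism showing that twisting both actions by an automorphism preserves Hochschild homology --- both of which are accurate.
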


\begin{proof}
  We define
  \begin{equation}\label{eq:1}
    \theta(x_{ij}) = q^{-a_i} x_{ij}
  \end{equation}
  and observe that the relations~\eqref{quant-mat-I}
  through~\eqref{quant-mat-IV} are invariant under this action.  The action of
  $M_q(n,m)$ twisted by $\theta_\alpha$ is defined as
  \[ 1\blacktriangleleft x_{ij} = \alpha(\theta_\alpha(x_{ij})) =
    \delta_{ij} q^{-a_i} q^{a_i} = \delta_{ij} \]
  and the left action is given as
  \[ x_{ij}\blacktriangleright 1 = \beta(\theta_\alpha(x_{ij}))
     = \delta_{ij} q^{b_i}q^{-a_i} \]
  for every generator
  $x_{ij}$.
\end{proof}

\begin{lemma}\label{lemma-homology-kq-trivial-coeff}
  The Hochschild homology of the quantum affine $n$-space
  $M_q(n,1)\cong M_q(1,n)$ with coefficients in ${}_\eta k_\eta$ is
  given by
  \[ H_\ell(M_q(1,n),\ {}_\eta k_\eta) \cong k^{\oplus\,
      \binom{n}{\ell}}. \]
\end{lemma}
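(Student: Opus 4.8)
The plan is to read off the homology from a small free bimodule resolution of $A:=M_q(1,n)\cong k_q^n$ --- its Koszul resolution --- and then to use the triviality of the coefficients to kill every differential. Write $V:=\mathrm{Span}_k\{x_1,\dots,x_n\}$ for the span of the generators and $R:=\mathrm{Span}_k\{x_j\ot x_i-q\,x_i\ot x_j\mid i<j\}\subseteq V\ot V$ for the span of the relations, so that $A=T(V)/\langle R\rangle$ is quadratic. For $\ell\geq 0$ set
\[
 W_\ell:=\bigcap_{p+r=\ell-2}V^{\ot p}\ot R\ot V^{\ot r}\subseteq V^{\ot \ell},\qquad W_0=k,\ W_1=V,\ W_2=R,
\]
and consider the Koszul bimodule complex
\[
 \cdots\to A\ot W_\ell\ot A\xra{d_\ell}A\ot W_{\ell-1}\ot A\to\cdots\to A\ot A\xra{\mu}A\to 0,
\]
whose differential is the alternating sum of a left multiplication by a generator (via the inclusion $W_\ell\hookrightarrow V\ot W_{\ell-1}$) and a right multiplication by a generator (via $W_\ell\hookrightarrow W_{\ell-1}\ot V$). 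Each term $A\ot W_\ell\ot A$ is free of rank $\dim_k W_\ell$ over $A^e:=A\ot A^{\mathrm{op}}$.

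First I would establish that this complex resolves $A$. Since the relations $x_jx_i=q\,x_ix_j$ ($i<j$) are quadratic and form a noncommutative Gr\"obner--Shirshov basis whose normal monomials are exactly the elements of the PBW basis $\C{B}$, the algebra $k_q^n$ is a PBW algebra and hence Koszul; the complex above is therefore exact. The same PBW basis shows that the Hilbert series of $A$ is $(1-t)^{-n}$, so the numerical Koszul duality $H_A(t)\,H_{A^!}(-t)=1$ forces $H_{A^!}(t)=(1+t)^n$; as $\dim_k W_\ell=\dim_k A^!_\ell$ for a Koszul algebra, this yields
\[
 \dim_k W_\ell=\binom{n}{\ell}.
\]
Concretely, $W_\ell$ is spanned by the quantum antisymmetrisations of $x_{i_1}\ot\cdots\ot x_{i_\ell}$ over $i_1<\cdots<i_\ell$, so that $\bigoplus_\ell W_\ell$ is the quantum exterior algebra on $V$.

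It then remains to apply ${}_\eta k_\eta\ot_{A^e}(-)$ to the resolution. Because $A\ot W_\ell\ot A\cong A^e\ot W_\ell$ is $A^e$-free on $W_\ell$ and ${}_\eta k_\eta=k$, the complex computing the homology has terms $W_\ell$. Every summand of $d_\ell$ multiplies some generator $x_i$ into the left or the right copy of $A$; after collapsing $A\ot A$ to $k$ through the actions $x_i\tr 1=\eta(x_i)=0$ and $1\tl x_i=\eta(x_i)=0$, each such summand acquires the scalar factor $\eta(x_i)=0$. Hence the induced differential vanishes identically, and
\[
 H_\ell(M_q(1,n),\,{}_\eta k_\eta)\cong W_\ell\cong k^{\oplus\binom{n}{\ell}}.
\]
The one genuinely substantive point is the exactness of the Koszul resolution, i.e. the Koszulity of $k_q^n$; once the differentials are seen to vanish, everything else is bookkeeping.
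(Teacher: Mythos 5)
Your argument is correct, but it is not the route the paper takes. The paper proves this lemma by induction on $n$, applying its own Hochschild--Serre-type spectral sequence (Proposition~\ref{prop:HochschildSS}) to the extension $k[x_{11}]\subseteq M_q(1,n)$: since $M_q(1,n)$ is free over $k[x_{11}]$ with the relative tensor powers reducing to tensor powers of $M_q(1,n-1)$, and since every action on the coefficient complex is through $\eta$, the $E^1$-page splits and degenerates, giving the recursion $H_\ell(M_q(1,n),{}_\eta k_\eta)\cong H_\ell(M_q(1,n-1),{}_\eta k_\eta)\oplus H_{\ell-1}(M_q(1,n-1),{}_\eta k_\eta)$ and hence the binomial coefficients via Pascal's rule. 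You instead compute directly from the quantum Koszul bimodule resolution of $k_q^n$, using PBW $\Rightarrow$ Koszul and the Hilbert series duality to identify $\dim_k W_\ell=\binom{n}{\ell}$, and then observe that the $\eta$-action annihilates every term of the induced differential. Both proofs are sound; the substantive external input in yours is the Koszulity of $k_q^n$ (standard, and in the spirit of the Koszul-type computations of Rosso and Hadfield--Kr\"ahmer cited in the introduction), whereas the paper's inductive argument is self-contained given Section~\ref{HS-Spectral-Sequence} and, more importantly, rehearses in the simplest case the exact mechanism (peeling off one row or column at a time and exploiting the $\eta$-action to split the $E^1$-page) that drives the harder computations for $M_q(n,m)$ and $M_q(n)$ later in Section~\ref{sect:MLN}. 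Your approach buys a concrete identification of the answer with the quantum exterior algebra (the Koszul dual) and a resolution of minimal length, which would also make the corresponding cohomology and more general one-dimensional coefficients accessible in one step; the paper's buys uniformity of method across the whole article.
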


\begin{proof}
  Setting $Q:=k[x_{11}] \subseteq M_q(1,n) =:P$, we have
  \begin{align*}
    H_\ast(M_q(1,n),\ {}_\eta k_\eta)
    \Leftarrow E^1_{i,j}
    = & H_j(k[x_{11}],\ {}_\eta k_\eta \ot_{k[x_{11}]} \, \underbrace{M_q(1,n) \ot_{k[x_{11}]} \ldots \ot_{k[x_{11}]} \, M_q(1,n)}_\text{$i$-many}) \\
    \cong & H_j(k[x_{11}],\ {}_\eta k_\eta \ot \, \underbrace{M_q(1,n-1) \odots \, M_q(1,n-1)}_\text{$i$-many}),
  \end{align*}
  where the $k[x_{11}]$ action is still given by $\eta$ on the coefficient complex. Thus,
  the $E^1$-term of the spectral sequence splits as
  \begin{align*}
    E^1_{i,j}
    = & H_j(k[x_{11}],\ {}_\eta k_\eta)\ot \, \underbrace{M_q(1,n-1) \odots \, M_q(1,n-1)}_\text{$i$-many}\\
    \cong & \CH_i(M_q(1,n-1),\ {}_\eta k_\eta)\otimes H_j(k[x_{11}],\ {}_\eta k_\eta)
  \end{align*}
  since the action of $M_q(1,n-1)$ on $H_j(k[x_{11}],k)$ is again given by $\eta$.
  On the other hand for $k[x_{11}]$ we have
  \[ H_j(k[x_{11}],\ {}_\eta k_\eta) =
    \begin{cases}
      k & \text{ if } j=0,1\\
      0 & \text{ otherwise.}
    \end{cases}
  \] 
  Then we see that 
  \[ H_\ell(M_q(1,n),\ {}_\eta k_\eta)
    \cong H_\ell(M_q(1,n-1),\ {}_\eta k_\eta)\oplus
          H_{\ell-1}(M_q(1,n-1),\ {}_\eta k_\eta). \]
  The result follows from recursion.
\end{proof}

Let $\Lambda^*(X)$ denote the exterior algebra generated by a set $X$ of
indeterminates.  The following result follows from an easy dimension
counting.
\begin{proposition}\label{prop:quantum-matrices-with-trivial-coeff}
  We have isomorphisms of vector spaces of the form
  \[ H_\ell(M_q(n,m),{}_\eta k_\eta) \cong \Lambda^\ell(x_{ij}\mid
    1\leq i\leq n,\ 1\leq j\leq m) \] for every $m,n\geq 1$ and
  $\ell\geq 0$.
\end{proposition}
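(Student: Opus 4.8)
The plan is to induct on the number $m$ of columns, peeling off the last column at each step via the flat-extension spectral sequence of Proposition~\ref{prop:HochschildSS}. The base case $m=1$ is Lemma~\ref{lemma-homology-kq-trivial-coeff}: since $M_q(n,1)\cong M_q(1,n)$ we get $H_\ell(M_q(n,1),\ {}_\eta k_\eta)\cong k^{\oplus\binom{n}{\ell}}$, which is exactly $\Lambda^\ell(x_{i1}\mid 1\le i\le n)$. For the inductive step I would set $Q:=M_q(n,m-1)\subseteq M_q(n,m)=:P$; by the PBW basis $\C{B}$, ordering the last-column generators last, $P$ is free over $Q$, so Proposition~\ref{prop:HochschildSS} applies with $M={}_\eta k_\eta$.

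First I would identify the $E^1$-page exactly as in the proof of Lemma~\ref{lemma-homology-kq-trivial-coeff}. Since the $Q$-action on the coefficients is through $\eta$, the tensor product annihilates every generator of the first $m-1$ columns, so each copy of $P$ collapses to the last column subalgebra $\Cl_m\cong M_q(n,1)$, giving
\[
  E^1_{i,j}\cong H_j(M_q(n,m-1),\ {}_\eta k_\eta)\ot\underbrace{M_q(n,1)\odots M_q(n,1)}_{i\text{-times}},
\]
with $M_q(n,1)$ still acting by $\eta$. The $i$-direction with its induced $d_1$ is precisely the Hochschild complex $\CH_\ast(M_q(n,1),\ {}_\eta k_\eta)$, so passing to $d_1$-homology yields
\[
  E^2_{i,j}\cong H_i(M_q(n,1),\ {}_\eta k_\eta)\ot H_j(M_q(n,m-1),\ {}_\eta k_\eta).
\]

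The step I expect to be the crux is the degeneration of this spectral sequence at $E^2$, and I would settle it by a grading argument rather than by computing differentials directly. The relations~\eqref{quant-mat-I}--\eqref{quant-mat-IV} are homogeneous for the multigrading recording the column-weight of each generator, and because $\eta$ annihilates all generators the surviving faces of the Hochschild differential are internal, so $\CH_\ast(M_q(n,m),\ {}_\eta k_\eta)$ is a complex of graded spaces and every page of the spectral sequence is internally graded. On $E^2$ the factor $H_i(M_q(n,1),\ {}_\eta k_\eta)$ is concentrated in column-$m$-weight exactly $i$ (its classes are built from $i$ distinct generators of $\Cl_m$), while $H_j(M_q(n,m-1),\ {}_\eta k_\eta)$ has column-$m$-weight $0$; thus $E^2_{i,j}$ is pure of column-$m$-weight $i$. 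Since $d_r\colon E^r_{i,j}\to E^r_{i-r,j+r-1}$ changes this weight by $-r\neq 0$ for $r\ge 2$ while preserving the internal grading, every higher differential vanishes and $E^2=E^\infty$. Working over the field $k$ the filtration splits, so
\[
  H_\ell(M_q(n,m),\ {}_\eta k_\eta)\cong\bigoplus_{i+j=\ell}H_i(M_q(n,1),\ {}_\eta k_\eta)\ot H_j(M_q(n,m-1),\ {}_\eta k_\eta).
\]

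It then remains to carry out the advertised dimension count. By the base case and the induction hypothesis the right-hand side has dimension $\sum_{i+j=\ell}\binom{n}{i}\binom{n(m-1)}{j}=\binom{nm}{\ell}$ by the Vandermonde identity, and $\binom{nm}{\ell}=\dim\Lambda^\ell(x_{ij}\mid 1\le i\le n,\ 1\le j\le m)$. Finally, since each tensor factor is identified with the exterior algebra on the generators of its column block compatibly with the gradings, the displayed isomorphism refines to $H_\ell(M_q(n,m),\ {}_\eta k_\eta)\cong\Lambda^\ell(x_{ij})$, with a wedge of generators corresponding to selecting them in the respective blocks. The only genuine obstacle is the degeneration, which the column-weight grading resolves cleanly; the rest is the binomial convolution.
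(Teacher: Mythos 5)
Your proof is correct and follows essentially the same route as the paper: an induction peeling off one row/column block at a time via Proposition~\ref{prop:HochschildSS}, the splitting of the $E^1$-page because the subalgebra acts through $\eta$, and the Vandermonde convolution $\sum_{i+j=\ell}\binom{n}{i}\binom{n(m-1)}{j}=\binom{nm}{\ell}$; the only differences are cosmetic (you induct on columns where the paper inducts on rows, equivalent under the transpose isomorphism $M_q(n,m)\cong M_q(m,n)$) or additive (your column-weight grading argument makes explicit the degeneration at $E^2$, which the paper leaves implicit).
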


\begin{proof}
  Let prove this by induction on $n$. For $n=1$ the result is given by
  Lemma~\ref{lemma-homology-kq-trivial-coeff}.  Assume we have the
  prescribed result for $n$.  Consider the extension
  $M_q(n,m)\subseteq M_q(n+1,m)$ with the canonical embedding.  Then
  by Proposition~\ref{prop:HochschildSS} we get
  \begin{align*}
    H_*(M_q(n+1,m),\ {}_\eta k_\eta)
    \Leftarrow E^1_{i,j} = & H_j(M_q(n,m), \underbrace{M_q(n+1,m)\otimes_{M_q(n,m)}\cdots\otimes_{M_q(n,m)} M_q(n+1,m)}_\text{$i$-times})\\
    \cong & H_j(M_q(n,m), \CH_i(M_q(1,m),\ {}_\eta k_\eta))\\
    \cong & H_j(M_q(n,m),\ {}_\eta k_\eta) \otimes \CH_i(M_q(1,m),\ {}_\eta k_\eta)
  \end{align*}
  since $M_q(n,m)$ acts by $\eta$ on the coefficient complex.  Accordingly,
  \begin{align*}
    H_\ell(M_q(n+1,m),{}_\eta k_\eta)
    \cong \bigoplus_{i+j=\ell} H_j(M_q(n,m),\ {}_\eta k_\eta) \otimes H_i(M_q(1,m),\ {}_\eta k_\eta),
  \end{align*}
  and therefore
  \begin{align*}
  \dim_k H_\ell(M_q(n+1,m),\ {}_\eta k_\eta)
    = & \sum_{\ell_1+\ell_2 = \ell} \dim_k H_{\ell_1}(M_q(n,m),\ {}_\eta k_\eta)
        \cdot\dim_k H_{\ell_2}(M_q(1,m),\ {}_\eta k_\eta)\\
    = & \sum_{\ell_1+\ell_2 = \ell} \binom{nm}{\ell_1}\binom{m}{\ell_2} = \binom{(n+1)m}{\ell}
  \end{align*}
  as we wanted to show.
\end{proof}

\subsection{Homology of $M_q(n)$}

In this subsection we compute the Hochschild homology of the algebra
$M_q(n)$ of quantum matrices with coefficients in
$\alpha = (q^{a_1},\ldots,q^{a_n})$, where $a_1,\ldots,a_n\in\B{Z}$.

To this end, we begin with the extension $\text{Row}_n\subseteq M_q(n)$ that yields, in the relative complex,
\begin{align*}
  {}_\alpha k \otimes_{\text{Row}_n} & \underbrace{M_q(n)\otimes_{\text{Row}_n}\cdots \otimes_{\text{Row}_n} M_q(n)}_{\text{$i$-times}}\\
  \cong & {}_\alpha k \otimes \underbrace{M_q(n-1,n)\otimes\cdots \otimes M_q(n-1,n)}_{\text{$i$-times}}.
\end{align*}
Hence, the $E^1$-page of the spectral sequence is
\[ 
E^1_{i,j} \cong H_j(\text{Row}_n,\CH_i(M_q(n-1,n),\ {}_\alpha k)). 
\]
We then note that the elements $x_{ni}\in \text{Row}_n$, for $i\neq n$, act on
the coefficient complex via $\eta$, whereas $x_{nn}$ act via a scalar $q^{a_n}$
on the left. On the right, the action of $x_{nn}$ is via another scalar
determined by the total degree of the terms in $x_{in}$ in
$\CH_i(M_q(n-1,n),{}_\alpha k)$ for $1\leq i\leq n-1$.  Accordingly,
\[ E^1_{i,j} \cong \bigoplus_a H_j(\text{Row}_n,\
  {}_{q^{a_n}}k_{q^{-a}}) \otimes \CH_i^{(a)}(M_q(n-1,n),\ {}_\alpha
  k),\] where $\CH_*^{(a)}$ denotes the subcomplex of
terms whose total degree in $x_{in}$, for $i=1,\ldots,n-1$, are precisely
$a \in \B{Z}$.  Let us remark also that since these terms act by $\eta$, the graded subspace
$\CH_*^{(a)}(M_q(n-1,n),\ {}_\alpha k)$ of
$\CH_*(M_q(n-1,n),\ {}_\alpha k)$ is indeed a subcomplex.  

For the homology of the row algebra this time, we use the lattice of
extensions $\text{Row}_n(a,b)\subseteq \text{Row}_n$, where
$\text{Row}_n(a,b)$ is the subalgebra of $\text{Row}_n$ generated by
$x_{na},\ldots,x_{nb}$. Thus, we may express
\[ H_j (\text{Row}_n, {}_{q^{a_n}}k_{q^{-a}})
  = \bigoplus_c H_c(\text{Row}_n(b+1,n), {}_{q^{a_n}}k_{q^{-a+c-j}})
    \otimes\Lambda{}^{j-c}(x_{n1},\ldots,x_{nb})
\]
for every $1\leq b\leq n-1$.  In particular,
\[ H_j (\text{Row}_n,{}_{q^{a_n}}k_{q^{-a}})
  = \bigoplus_c H_c(\text{Row}_n(n,n), {}_{q^{a_n}}k_{q^{-a+c-j}})
    \otimes\Lambda^{j-c}(x_{n1},\ldots,x_{nn-1}).
\]
Since $\text{Row}_n(n,n) = k[x_{nn}]$, the direct sum above has
only two non-zero terms: those with $c=0$ and $c=1$. Therefore,
\begin{align*}
  H_j (\text{Row}_n,{}_{q^{a_n}}k_{q^{-a}})
  = & \Big(H_0(k[x_{nn}], {}_{q^{a_n}}k_{q^{-j-a}})\otimes\Lambda^j (x_{n1},\ldots,x_{nn-1})\Big)\\
    & \oplus \Big(H_1(k[x_{nn}], {}_{q^{a_n}}k_{q^{-j-a+1}})\otimes\Lambda^{j -1}(x_{n1},\ldots,x_{nn-1})\Big).
\end{align*}
On the other hand, we observe that the homology is zero unless ${}_{q^{a_n}}k_{q^{-a+c-j}}$ is symmetric.  Thus,
\begin{align*}
  H_j(\text{Row}_n,{}_{q^{a_n}}k_{q^{-a}})= & \begin{cases}
    \Lambda^j _q(x_{n1},\dots,x_{nn-1}) & \text{ if } a =  -a_n-j\geq 0,\\
    \Lambda^{j-1}(x_{n1},\dots,x_{nn-1})x_{nn} & \text{ if } a = -a_n-j+1\geq 0,\\
    0 & \text{ otherwise.}
  \end{cases}
\end{align*}
As a result, the $E^1$-page of the spectral sequence reduces to
\begin{align*}
  H_\ell(M_q(n), \,& {}_{\alpha}k)\\
  \Leftarrow  E^1_{i,j}\cong
  & \CH_i^{(-a_n-j)}(M_q(n-1,n),\ {}_\alpha k\otimes \Lambda^j (x_{n1},\dots,x_{n,n-1})) \\
  & \oplus
    \CH_i^{(-a_n-j+1)}(M_q(n-1,n),\ {}_\alpha k\otimes \Lambda^{j-1}(x_{n1},\dots,x_{n,n-1})x_{nn}).
\end{align*}
Now, let $\text{Col}_n(a,b)$ be the subalgebra generated by
$x_{an},\ldots,x_{bn}$ in $M_q(n-1,n)$. Then, in view of Proposition~\ref{prop:HochschildSS} we have
\begin{align*}
  H_i^{(a)}(M_q(n-1,n),\, & {}_\alpha k\otimes \Lambda^j(x_{1n},\ldots,x_{nn}))\\
  & \Leftarrow E^1_{r,s} = H_s^{(a)}(\text{Col}_n(1,n-1),\CH_r(M_q(n-1),{}_\alpha k\otimes \Lambda^j(x_{1n},\ldots,x_{nn}))).
\end{align*}
Since $\text{Col}_n(1,n-1)$ acts on the coefficient complex via $\eta$,
 the $E^1$-page splits, and we arrive at
\begin{align*}
  H_i^{(a)}(M_q(n-1,n), \,& {}_\alpha k\otimes \Lambda^j(x_{1n},\ldots,x_{nn}))\\
  \cong & H_{i-a}(M_q(n-1),{}_\alpha k\otimes \Lambda^j(x_{1n},\ldots,x_{nn})\otimes \Lambda^a(x_{1n},\ldots,x_{n-1,n})).
\end{align*}
From these we get the $E^2$-page
\begin{align*}
E^2_{i,j}
  \cong & 
    H_{i+j+a_n}(M_q(n-1),\ {}_\alpha k\otimes \Lambda^j (x_{n1},\dots,x_{n,n-1})\otimes \Lambda^{-a_n-j}(x_{1n},\ldots,x_{n-1,n})) \\
    & \oplus H_{i+j+a_n-1}(M_q(n-1),\ {}_\alpha k\otimes \Lambda^{j-1}(x_{n1},\dots,x_{n,n-1})x_{nn}\otimes \Lambda^{-a_n-j+1}(x_{1n},\ldots,x_{n-1,n})).
\end{align*}
Therefore,
\begin{align*}
 H_\ell( & M_q(n), {}_\alpha k)\\
  \cong & \bigoplus_{j,s}
          H_{\ell+a_n-s}(M_q(n-1),\ {}_\alpha k \otimes \Lambda^j(x_{n1},\ldots,x_{n,n-1})
          \otimes \Lambda^{-a_n-j}(x_{1n},\ldots,x_{n-1,n}))\otimes\Lambda^s(x_{nn}).
\end{align*}

Now, consider the subspace $S_n^*$ of $\Lambda^*(x_{ij}\mid 1\leq i,j\leq n)$
generated by $x_{in}$ and $x_{nj}$ with $1\leq i\leq n-1$ and $1\leq j\leq n-1$.
Also, we use $S_n^*(b)$ to denote the homogeneous vector subspace of $S_n^*$ of
terms whose total degree over terms of type $x_{in}$ and $x_{ni}$ is $b$.

We observe that
\begin{align*}
  H_\ell(M_q(n),{}_\alpha k)
  \cong & \bigoplus_s H_{\ell+a_n-s}(M_q(n-1),\ {}_\alpha k \otimes S_{n-1}^*(-a_n))\otimes\Lambda^s(x_{nn})\\
  \cong & \bigoplus_{\beta,s} H_{\ell+a_n-s}(M_q(n-1),\ {}_{\alpha} k_{\beta^{-1}}) \otimes S_{n-1}^*(b_1,\ldots,b_{n-1},-a_n)\otimes\Lambda^s(x_{nn})\\
  \cong & \bigoplus_{\beta,s} H_{\ell+a_n-s}(M_q(n-1),\ {}_{\alpha\beta} k) \otimes S_{n-1}^*(b_1,\ldots,b_{n-1},-a_n)\otimes\Lambda^s(x_{nn}).  
\end{align*}
The sum is taken over all $\beta=(q^{b_1},\ldots,q^{b_n})$, where the multi-degree $(b_1,\ldots,b_n)$
indicates that we consider the $k$-vector space spanned by monomials
$\Gamma$ that has the total degree $\deg_i(\Gamma) = b_i$ in terms of
$x_{si}$ and $x_{it}$ for all $s,t=1,\ldots,n$ and $i=1,\ldots,n$, and
we set
\begin{equation}\label{eq:degree}
  \deg_i(x_{ij}) = \deg_i(x_{ji}) = 
    \begin{cases}
      -1 & \text{ if } j<i,\\
       0 & \text{ if } j=i,\\      
       1 & \text{ if } j>i.
    \end{cases}
\end{equation}
It follows at once that $b_n = -a_n$, since there are no indices $j>n$.  Proceeding the computation recursively, we arrive at the following result.

\begin{theorem}
  Fix a sequence of non-zero scalars
  $\alpha = (q^{a_1},\ldots,q^{a_n})$, and let us define
  \[ \Lambda^*_{(\alpha)}(x_{ij}\mid 1\leq i,j\leq n) \] as the subspace of
  differential forms with multi-degree $(a_1,\ldots,a_n)$ where the $a_i$ is the
  total degree -- in the sense of \eqref{eq:degree} -- of terms involving the
  indeterminates $x_{si}$ and $x_{it}$ for $s,t=1,\ldots,n$, and $i=1,\ldots,n$.
  Then
  \[ H_{\ell+|\alpha|}(M_q(n),{}_\alpha k) \cong \bigoplus_s
    \Lambda^{\ell-s}_{(\alpha)}(x_{ij}\mid 1\leq i\neq j\leq n)\otimes\Lambda^s(x_{11},\ldots,x_{nn})
  \] as vector spaces for every $n\geq 1$, $\ell\geq 0$.  
\end{theorem}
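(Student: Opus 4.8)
The plan is to prove the formula by induction on $n$, using at each stage the two-step reduction assembled just above: peel off the last row $\text{Row}_n$ and then, inside $M_q(n-1,n)$, the last column, each time via the spectral sequence of Proposition~\ref{prop:HochschildSS}. The decisive structural choice is to state the inductive hypothesis for \emph{every} one-dimensional weight on $M_q(n-1)$ and not merely for $\alpha$, since the reduction feeds twisted weights back into $M_q(n-1)$. I would first settle the base case $n=1$, where $M_q(1)=k[x_{11}]$: a direct Koszul computation gives $H_j(k[x_{11}],{}_\alpha k)=k$ for $j=0,1$ when $a_1=0$ and $0$ otherwise, matching the right-hand side, whose off-diagonal factor set is empty so that only $\Lambda^*(x_{11})$ survives, and only when the multi-degree constraint $a_1=0$ holds.

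For the inductive step I would start from the recursion obtained above,
\[
  H_\ell(M_q(n),{}_\alpha k)\;\cong\;\bigoplus_{\beta,s}
  H_{\ell+a_n-s}\big(M_q(n-1),{}_{\alpha\beta}k\big)\otimes S_{n-1}^*(b_1,\ldots,b_{n-1},-a_n)\otimes\Lambda^s(x_{nn}),
\]
and substitute the inductive hypothesis into the $M_q(n-1)$-factor. The engine converting this into a clean statement is Proposition~\ref{prop:skew}: the right action on the last-hook coefficients, produced by commuting the off-diagonal generators $x_{in},x_{nj}$ past $M_q(n-1)$, is a character, and the proposition rewrites the resulting two-sided module as the one-sided ${}_{\alpha\beta}k$, to which the hypothesis applies verbatim. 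What then remains is a combinatorial reassembly: the hook space (spanned by $x_{in}$ and $x_{nj}$, the off-diagonal entries of the last row and column) tensored with the off-diagonal forms of $M_q(n-1)$ must reconstitute $\Lambda^*_{(\alpha)}(x_{ij}\mid i\neq j)$ for $M_q(n)$, while the corner factor $\Lambda^s(x_{nn})$ extends $\Lambda^*(x_{11},\ldots,x_{n-1,n-1})$ to $\Lambda^*(x_{11},\ldots,x_{nn})$.

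The heart of the argument is the multi-degree bookkeeping of \eqref{eq:degree}. The $n$-th coordinate $\deg_n$ is carried entirely by the hook, since every $x_{in}$ and $x_{nj}$ contributes $-1$ while all remaining generators contribute $0$; this pins down the hook's contribution in the $n$-th direction, and since each off-diagonal generator has coordinate degrees summing to zero, the homology vanishes unless $\sum_i a_i=0$, consistently with $\Lambda^*_{(\alpha)}$ being empty in that case. In the lower coordinates the twist weight $\beta$ is the complementary weight: the off-diagonal forms of $M_q(n-1)$ carry the multi-degree left over after the hook has taken its share $(b_1,\ldots,b_{n-1})$, so that the two multi-degrees add up to $(a_1,\ldots,a_{n-1})$, and summing over all admissible $\beta$ recovers every off-diagonal monomial of multi-degree $\alpha$ exactly once. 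Run in parallel with additive tracking of the homological degree through the two spectral sequences, this yields the overall shift $|\alpha|=\sum_i a_i$ (which is $0$ precisely when the homology is non-zero) and the stated decomposition.

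The step I expect to be the main obstacle is exactly this reassembly over the twist characters: showing that the family $\{H_*(M_q(n-1),{}_{\alpha\beta}k)\}_\beta$ neither over- nor under-counts, i.e.\ that grading the off-diagonal exterior algebra of $M_q(n)$ by its ``last-hook'' multi-degree corresponds bijectively to the decomposition produced by the reduction. Keeping the $q$-commutation scalars that define each $\beta$ coherent with the sign convention of \eqref{eq:degree}, and checking that the two surviving cases of the row-homology computation — the $c=0$ and $c=1$ cases, without and with the corner $x_{nn}$ — glue correctly onto the diagonal exterior algebra, is where the bookkeeping is most delicate and most error-prone.
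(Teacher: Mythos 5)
Your proposal follows the paper's own route: the theorem is obtained by iterating the $\mathrm{Row}_n$/$\mathrm{Col}_n$ spectral-sequence reduction of Proposition~\ref{prop:HochschildSS} together with the untwisting of Proposition~\ref{prop:skew}, i.e.\ exactly the recursion displayed just before the theorem statement, which the paper then closes with ``proceeding the computation recursively.'' Your only addition is to make that recursion an explicit induction with base case $k[x_{11}]$ and an inductive hypothesis quantified over all weights $\alpha\beta$ rather than just $\alpha$ — a point the paper leaves implicit but which is indeed needed for the recursion to close.
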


\subsection{Homologies of $GL_q(n)$ and $SL_q(n)$ with coefficients in
  ${}_{f_{q,n}^{-1}} k$}

We are going to derive the homology of $GL_q(n)$ and $SL_q(n)$ from that of $M_q(n)$ by using the localization of the Hochschild homology.

Let us first recall the localization of the homology, \cite[Prop. 1.1.17]{Loday-book}.  
\begin{proposition}\label{prop-local}
Given an algebra $A$, and a multiplicative subset $S \subseteq A$ so that $1\in S$ and $0 \notin S$, and an $A$-bimodule $M$, there are the following canonical isomorphisms:
\[
H_\ast(A,M)_S \cong H_\ast(A,M_S) \cong H_\ast(A_S,M_S),
\]
where
\[
M_S := Z(A)_S \ot_{Z(A)} M,
\]
$Z(A)$ denotes the center of $A$, and $Z(A)_S$ stands for the localization of $Z(A)$ at $S$.
\end{proposition}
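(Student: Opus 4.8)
The plan is to establish the two isomorphisms separately, both resting on a single structural fact: central localization $S^{-1}(-) = Z(A)_S\ot_{Z(A)}(-)$ is an exact functor, since $Z(A)_S$ is flat over $Z(A)$, and $A\to A_S$ is a flat ring epimorphism, so that $A_S\ot_A A_S\cong A_S$. (For the localizations $A_S$, $Z(A)_S$ and $M_S$ to be defined at all one needs $S$ to consist of central elements; this is the only case we use, since in our application $S$ is the multiplicative set generated by the central quantum determinant $\C{D}_q$.)

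For the first isomorphism $H_\ast(A,M)_S \cong H_\ast(A,M_S)$, I would use that the center acts on the Hochschild complex $\CH_\ast(A,M) = M\ot A^{\ot\ast}$ through the coefficient module, $z\cdot(m\ot a_1\ot\cdots\ot a_n) = (zm)\ot a_1\ot\cdots\ot a_n$, and that this chain-level action is a chain map realizing the $Z(A)$-module structure on $H_\ast(A,M)$. Applying $Z(A)_S\ot_{Z(A)}(-)$ to the complex identifies $S^{-1}\CH_\ast(A,M)$ with $\CH_\ast(A,M_S)$, because $Z(A)_S\ot_{Z(A)}(M\ot A^{\ot n}) \cong (Z(A)_S\ot_{Z(A)} M)\ot A^{\ot n} = M_S\ot A^{\ot n}$ and the induced differential is exactly the Hochschild differential for the $A$-bimodule $M_S$ (here $Z(A)$-linearity of $b$ is what makes this match). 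Since localization is exact it commutes with homology, yielding $H_\ast(A,M)_S \cong H_\ast(S^{-1}\CH_\ast(A,M)) \cong H_\ast(A,M_S)$.

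For the second isomorphism $H_\ast(A,M_S)\cong H_\ast(A_S,M_S)$, I would show that the inclusion $\vp\colon A\hookrightarrow A_S$ induces a quasi-isomorphism $\CH_\ast(A,M_S)\to\CH_\ast(A_S,M_S)$; note that $M_S$ carries a canonical $A_S$-bimodule structure because $S$ acts invertibly on it. The mechanism is that $M_S$ is already $S$-local, so localizing the remaining algebra tensor slots cannot change the homology: passing to the normalized bar complex and invoking the collapse $A_S\ot_A A_S\cong A_S$ coming from the epimorphism property of localization, the degenerate tensor factors introduced by replacing $A$ with $A_S$ drop out. Equivalently, one may localize $\CH_\ast(A,M_S)$ through one of the algebra slots and compare with the computation of the previous paragraph.

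The step I expect to be the main obstacle is this second isomorphism: one must argue carefully that enlarging $A$ to its central localization $A_S$ in the algebra arguments, while keeping coefficients in the already-local module $M_S$, induces an isomorphism on homology. The verification is the bookkeeping with the normalized bar complex together with the idempotence $A_S\ot_A A_S\cong A_S$ of central localization; the first isomorphism, by contrast, is a formal consequence of exactness of localization and of the center acting through the coefficients. Concatenating the two isomorphisms gives the stated chain $H_\ast(A,M)_S \cong H_\ast(A,M_S)\cong H_\ast(A_S,M_S)$.
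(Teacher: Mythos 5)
First, a point of reference: the paper does not actually prove this proposition --- it is quoted from Loday's book (Prop.~1.1.17 there) and used as a black box --- so there is no in-paper argument to compare yours against, and I will judge the proposal on its own terms. Your treatment of the first isomorphism $H_\ast(A,M)_S\cong H_\ast(A,M_S)$ is correct and is the standard one: left multiplication by a central element $z$ commutes with the Hochschild boundary (since $(zm)a_1=z(ma_1)$ and $a_n(zm)=(a_nz)m=(za_n)m=z(a_nm)$), so $\CH_\ast(A,M)$ is a complex of $Z(A)$-modules with the action concentrated in the coefficient slot, $Z(A)_S\ot_{Z(A)}\CH_\ast(A,M)\cong\CH_\ast(A,M_S)$ termwise and compatibly with $b$, and flatness of $Z(A)_S$ over $Z(A)$ lets you pull the localization through homology. (Your parenthetical that $S$ must be central for any of this to make sense is also right, and is satisfied in the paper's application since $S$ is generated by the quantum determinant.)

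The gap is in the second isomorphism, exactly where you predicted trouble, and the mechanism you propose would not work as described. The tensor factors of $\CH_\ast(A_S,M_S)=M_S\ot A_S^{\ot n}$ are taken over the ground field $k$, not over $A$, so the ring-epimorphism identity $A_S\ot_A A_S\cong A_S$ has no direct bearing on them; and the normalized bar complex only kills slots containing the unit, not slots containing denominators, so nothing ``drops out.'' Your two ingredients --- flatness of $A\to A_S$ and the collapse $A_S\ot_A A_S\cong A_S$ --- are the right ones, but they have to be deployed at the level of resolutions over the enveloping algebra rather than slotwise in the Hochschild complex. For instance: $A_S\ot_A\CB_\ast(A)\ot_A A_S$ is, by two-sided flatness, a resolution of $A_S\ot_A A\ot_A A_S\cong A_S$ by induced $A_S$-bimodules, and applying $M_S\ot_{A_S\ot A_S^{\mathrm{op}}}(-)$ to it returns precisely $\CH_\ast(A,M_S)$; hence $H_\ast(A,M_S)\cong\tor_\ast^{A_S\ot A_S^{\mathrm{op}}}(M_S,A_S)=H_\ast(A_S,M_S)$. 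Equivalently, one can invoke flat base change for $\tor^{A\ot A^{\mathrm{op}}}$ along the central localization at $S\ot S$, using that $M_S$ is already local for that multiplicative set. Without some version of this, the second isomorphism in your write-up remains an assertion rather than a proof.
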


Now, in view of the fact that $GL_q(n)$ is the localization of $M_q(n)$ at $S=\{\C{D}_q^n\mid n\geq 0\}$,
we obtain the Hochschild homology of $GL_q(n)$ readily from the above localization result. 
\begin{theorem}\label{thm:GLN-SLN}
  We have
  \[ H_\ell(M_q(n),{}_{f_{q,n}^{-1}}k) = H_\ell(GL_q(n),{}_{f_{q,n}^{-1}}k) 
    \cong H_\ell(SL_q(n),{}_{f_{q,n}^{-1}}k) \oplus
    H_{\ell-1}(SL_q(n),{}_{f_{q,n}^{-1}}k)
  \]
  for every $\ell\geq 0$ and for every $n\geq 1$.
\end{theorem}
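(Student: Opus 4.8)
The statement has two parts: the equality $H_\ell(M_q(n),{}_{f_{q,n}^{-1}}k) = H_\ell(GL_q(n),{}_{f_{q,n}^{-1}}k)$, and the splitting of the latter into two copies of the $SL_q(n)$ homology with a degree shift. The plan is to treat these separately, handling the $M_q(n)$-to-$GL_q(n)$ step via localization and the $GL_q(n)$-to-$SL_q(n)$ step via the product decomposition of Proposition~\ref{LS:SLNq}.

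For the first equality I would invoke Proposition~\ref{prop-local} with $A=M_q(n)$ and the multiplicative set $S=\{\C{D}_q^m\mid m\geq 0\}$, so that $A_S=GL_q(n)$. The key input is that the coefficient module matters: I need $({}_{f_{q,n}^{-1}}k)_S \cong {}_{f_{q,n}^{-1}}k$ as a $GL_q(n)$-bimodule. Here the crucial observation is the computation \eqref{f(D)}, which gives $f_{q,n}(\C{D}_q)=1$, hence $\C{D}_q$ acts on ${}_{f_{q,n}^{-1}}k$ as the identity on both sides; therefore inverting $\C{D}_q$ on the coefficients changes nothing, and $M_S\cong M$. Localizing $H_\ast(M_q(n),{}_{f_{q,n}^{-1}}k)$ at $S$ then leaves the homology unchanged for the same reason—the $\C{D}_q$-action is already invertible on these coefficients—so the natural map $H_\ell(M_q(n),M)\to H_\ell(GL_q(n),M)$ is an isomorphism. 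This identifies the first two terms in the statement.

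For the splitting, I would use Proposition~\ref{LS:SLNq}, which gives an algebra isomorphism $GL_q(n)\cong SL_q(n)\otimes k[\C{D}_q,\C{D}_q^{-1}]$. Under the standard K\"unneth formula for Hochschild homology of a tensor product of algebras, $H_\ast(GL_q(n),M)$ decomposes as a tensor product of $H_\ast(SL_q(n),-)$ with the Hochschild homology of the Laurent algebra $k[\C{D}_q,\C{D}_q^{-1}]$ with the appropriate restricted coefficients. Since $f_{q,n}(\C{D}_q)=1$ again, the coefficient module restricts trivially on the $k[\C{D}_q,\C{D}_q^{-1}]$ factor, so I compute $H_\ast(k[\C{D}_q,\C{D}_q^{-1}],{}_\varepsilon k)$, which is the Hochschild homology of a Laurent polynomial ring in one (central) variable with trivial symmetric coefficients. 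This is $k$ in degrees $0$ and $1$ and vanishes otherwise, exactly like the one-variable computation already used in Lemma~\ref{lemma-homology-kq-trivial-coeff}. Feeding these two classes into the K\"unneth decomposition produces the summands $H_\ell(SL_q(n),{}_{f_{q,n}^{-1}}k)$ (from degree $0$) and $H_{\ell-1}(SL_q(n),{}_{f_{q,n}^{-1}}k)$ (from degree $1$), yielding the claimed direct sum.

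The main obstacle is the bookkeeping of the coefficient module across both steps: I must verify carefully that the one-dimensional bimodule ${}_{f_{q,n}^{-1}}k$ behaves as a \emph{symmetric} module with respect to the central element $\C{D}_q$, so that both the localization is neutral on coefficients and the K\"unneth factor splits off cleanly. This is precisely where \eqref{f(D)} does the essential work—without $f_{q,n}(\C{D}_q)=1$ the $k[\C{D}_q,\C{D}_q^{-1}]$-factor would carry a nontrivial twist and its homology could vanish or differ, breaking the degree-shift pattern. I would therefore state the coefficient compatibility as an explicit intermediate claim before applying either Proposition~\ref{prop-local} or the K\"unneth formula.
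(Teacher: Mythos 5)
Your proposal is correct and takes essentially the same route as the paper: the first equality via the localization result of Proposition~\ref{prop-local}, using that $\C{D}_q$ acts invertibly (indeed by $1$, by \eqref{f(D)}) on the coefficient module, and the splitting via the decomposition $GL_q(n)\cong SL_q(n)\otimes k[\C{D}_q,\C{D}_q^{-1}]$ of Proposition~\ref{LS:SLNq} combined with a K\"unneth argument (for which the paper cites \cite[Theorem 2.7]{KaygSutl18}). Your explicit check that the coefficients restrict to the symmetric trivial module on the Laurent factor, whose Hochschild homology is $k$ in degrees $0$ and $1$, is precisely the point the paper's proof relies on.
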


\begin{proof}
Let us recall from \cite[Thm. 1.6]{NoumYamaMima93}, see also \cite{FRT89}, that 
\[
Z(M_q(n)) = k[\C{D}_q],
\]
and that $GL_q(n) = M_q(n)_S$ for the multiplicative system $S=\{\C{D}_q^n\mid n\geq 0\}$ generated by the quantum determinant.
Accordingly, we have
\[
Z(M_q(n))_S = k[\C{D}_q, \C{D}_q^{-1}],
\]
and
\[
{}_{f_{q,n}^{-1}}k_S = k[\C{D}_q^{-1}] \ot {}_{f_{q,n}^{-1}}k
\]
is the $GL_q(n)$-bimodule so that the $M_q(n)$-bimodule structure concentrated on ${}_{f_{q,n}^{-1}}k$, and the $k[\C{D}_q^{-1}]$-bimodule structure is on $k[\C{D}_q^{-1}]$. Then we have
\[
H_\ast(GL_q(n),{}_{f_{q,n}^{-1}}k_S) \cong k[\C{D}_q^{-1}] \ot H_\ast(GL_q(n),{}_{f_{q,n}^{-1}}k)
\]
so that the $GL_q(n)$-bimodule structure on ${}_{f_{q,n}^{-1}}k$ is determined by the trivial action of $\C{D}_q^{-1}$.  On the other hand,
\[
H_\ast(M_q(n),{}_{f_{q,n}^{-1}}k)_S \cong k[\C{D}_q^{-1}] \ot H_\ast(M_q(n),{}_{f_{q,n}^{-1}}k),
\]
where the $GL_q(n)$-bimodule structure is given in such a way that the
$M_q(n)$-bimodule structure is on $H_\ast(M_q(n),{}_{f_{q,n}^{-1}}k)$, and
the $\C{D}_q^{-1}$-action structure is concentrated on
$k[\C{D}_q^{-1}]$.  Proposition \ref{prop-local} then yields the first
claim.  The second part the computation follows
from~\cite[Proposition]{LevaStaff93} and \cite[Theorem
2.7]{KaygSutl18}.
\end{proof}

\subsection{Homologies of $GL_q(n)$ and $SL_q(n)$ with coefficients in themselves}

Let $H$ be a Hopf algebra with an invertible antipode, and let $X$ be
an arbitrary $H$-bimodule.
\begin{proposition}\label{prop:maclane-isomorphism}
  There is an isomorphism of graded vector spaces
  $H_*(H,X)\cong \tor^H_*(ad(X),k)$ where we define the adjoint action
  of $H$ on $X$ as
  \[ x^h := S^{-1}(h_{(1)})xh_{(2)} \] for every $x\in X$ and
  $h\in H$.
\end{proposition}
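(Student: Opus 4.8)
The plan is to deduce the statement from the classical identification of Hochschild homology with a $\tor$-group over the enveloping algebra, followed by a change-of-rings (flat base change) along a Hopf-theoretic twisting map. Throughout write $H^e := H\ot H^{\mathrm{op}}$ with product $(a\ot b)(a'\ot b') = aa'\ot b'b$, so that an $H$-bimodule $X$ is the same datum as a right $H^e$-module via $x\cdot(a\ot b) = bxa$, while the regular bimodule $H$ is a left $H^e$-module via $(a\ot b)\tr h = ahb$. First I would recall the standard isomorphism $H_*(H,X)\cong \tor^{H^e}_*(X,H)$, \cite[1.1.13]{Loday-book}, which reduces the problem to understanding the left $H^e$-module $H$.

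Next I would introduce the twisting map $\iota\colon H\to H^e$ given by $\iota(h) = h_{(2)}\ot S^{-1}(h_{(1)})$. A short Sweedler computation, using that $S^{-1}$ is the antipode of $H^{\mathrm{cop}}$ and hence an algebra anti-homomorphism, shows that $\iota$ is a morphism of algebras: indeed $\iota(g)\iota(h) = g_{(2)}h_{(2)}\ot S^{-1}(h_{(1)})S^{-1}(g_{(1)}) = (gh)_{(2)}\ot S^{-1}((gh)_{(1)})$. This is the only place where invertibility of $S$ is used, and it is precisely what forces the $S^{-1}$ appearing in the adjoint action. Restricting the right $H^e$-module $X$ along $\iota$ yields the right $H$-module with action $x\cdot\iota(h) = S^{-1}(h_{(1)})\,x\,h_{(2)} = x^h$, that is, exactly $\ad(X)$.

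The heart of the argument is a Hopf-module lemma: via $\iota$, the algebra $H^e$ is free (in particular flat) as a right $H$-module, and there is an isomorphism of left $H^e$-modules $H^e\ot_H k \cong H$, $(a\ot b)\ot 1\mapsto ab$, where $k$ is the trivial left $H$-module. Well-definedness of this map is the identity $\sum a\,h_{(2)} S^{-1}(h_{(1)})\,b = \varepsilon(h)\,ab$ (the $H^{\mathrm{cop}}$-antipode axiom), and $H^e$-linearity is immediate; that it is bijective is the Fundamental Theorem of Hopf modules, which is where bijectivity of the antipode enters decisively. In other words, the regular $H^e$-module $H$ is induced from the trivial $H$-module along $\iota$.

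Finally I would invoke flat base change for $\tor$: since $H^e$ is flat as a right $H$-module via $\iota$, inducing a free $H$-resolution $P_\bullet\to k$ up to $H^e$ produces an $H^e$-projective resolution of $H^e\ot_H k\cong H$, and $X\ot_{H^e}(H^e\ot_H P_\bullet)\cong \ad(X)\ot_H P_\bullet$. Passing to homology gives
\[ H_*(H,X)\cong \tor^{H^e}_*(X,H)\cong \tor^{H^e}_*(X,\,H^e\ot_H k)\cong \tor^H_*(\ad(X),k), \]
as claimed. I expect the main obstacle to be the Hopf-module lemma, namely checking that $\iota$ really exhibits $H^e$ as an $H$-free module; the cleanest route is to trivialize the $\iota$-action by the bijective \emph{Galois} map $a\ot b\mapsto ab_{(1)}\ot b_{(2)}$, whose inverse uses $S$, after which the remainder is formal homological algebra. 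An alternative, more computational route avoids $H^e$ entirely by writing down an explicit isomorphism between the Hochschild complex $X\ot H^{\ot n}$ and the bar complex computing $\tor^H_*(\ad(X),k)$, assembled from iterated coproducts and $S^{-1}$, and verifying directly that it intertwines the two differentials; this produces the same isomorphism but replaces the structure theorem by a longer diagram chase.
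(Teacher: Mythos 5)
Your argument is correct, but it takes a genuinely different route from the paper's. The paper proves the proposition by writing down an explicit map of pre-simplicial modules
\[ \rho_n(x\ot h_1\ot\cdots\ot h_n) = h_{1,(1)}\cdots h_{n,(1)}x\ot h_{1,(2)}\ot\cdots\ot h_{n,(2)} \]
from the Hochschild complex $\CH_*(H,X)$ to the two-sided bar complex $\CB_*(\ad(X),H,k)$, and asserts (calling the verification ``straight-forward but tedious'') that it is an isomorphism of complexes --- essentially the ``alternative, more computational route'' you mention in your final sentence. Your version instead factors the statement through standard homological algebra: the identification $H_*(H,X)\cong \tor^{H^e}_*(X,H)$, the algebra map $\iota(h)=h_{(2)}\ot S^{-1}(h_{(1)})$ whose restriction functor produces exactly $\ad(X)$, the induction isomorphism $H^e\ot_H k\cong H$ with $H^e$ free as a right $H$-module via $\iota$, and flat base change for $\tor$. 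All the individual steps check out: $\iota$ is multiplicative because $S^{-1}$ is the antipode of $H^{\mathrm{cop}}$ and an algebra anti-homomorphism; well-definedness of $(a\ot b)\ot 1\mapsto ab$ is the identity $h_{(2)}S^{-1}(h_{(1)})=\varepsilon(h)1$; and bijectivity can even be verified by hand, since $(a\ot b)=(ab_{(1)}\ot 1)\,\iota(S(b_{(2)}))$ shows that $h\mapsto (h\ot 1)\ot 1$ is a two-sided inverse, so you do not strictly need to invoke the Fundamental Theorem of Hopf modules. What your route buys is conceptual transparency --- it isolates exactly where invertibility of $S$ is used and replaces the paper's unexecuted chain-level check with quotable general facts; what the paper's route buys is an explicit formula for the isomorphism at the level of chains, which is what the authors need later when they trace explicit homology classes and the quantum determinant through this identification. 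Both arguments establish the same statement, and each would benefit from the other's strengths.
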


\begin{proof}
  Let $\CB_*(H)$ be the bar complex of $H$ viewed as an algebra and we
  will use $\CB_*(X,H,Y)$ to denote $X\otimes_H \CB_*(H)\otimes_H Y$
  to denote the two sided complex with coefficients in a left
  $H$-module $Y$ and right $H$-module $X$.  Let us write an
  isomorphism of complexes
  $\rho_*\colon \CH_*(H,X)\to \CB_*(ad(X),H,k)$ as
  \[ \rho_n(x\otimes h_1\otimes\cdots\otimes h_n) = h_{1,(1)}\cdots
    h_{n,(1)}x\otimes h_{1,(2)}\otimes\cdots\otimes h_{n-1,(2)}\otimes
    h_{n,(2)} \] It is straight-forward but tedious exercise that
  $\rho_*$ is an isomorphism of pre-simplicial $k$-modules, and
  therefore, an isomorphism of complexes.
\end{proof}

The isomorphism given in Proposition~\ref{prop:maclane-isomorphism}
is a well-known isomorphism used in (co)homology of Hopf algebras,
and usually referred as \emph{MacLane Isomorphism}.  This is an
extension of the same result for a group algebras, and universal
enveloping algebras.

Let us use ${}_{f_{n,q}^{-1}}H$ to denote the regular representation of
$H=SL_q(n)$ or $H=GL_q(n)$ twisted on the left by the automorphism
$\theta_{f_{n,q}^{-1}}$ defined in Proposition~\ref{prop:skew}.
\begin{theorem}
  The canonical map
  $\C{D}_q\colon {}_{f_{q,n}^{-1}} k\to ad({}_{f_{q,n}^{-1}} GL_q(n))$
  induces a split injection in homology of the form
  $H_m(GL_q(n),{}_{f_{q,n}^{-1}} k)\to
  H_m(GL_q(n),{}_{f_{n,q}^{-1}}GL_q(n))\cong H_{m+1}(GL_q(n))$ and
  $H_m(SL_q(n),{}_{f_{q,n}^{-1}} k)\to
  H_m(SL_q(n),{}_{f_{q,n}^{-1}}SL_q(n))\cong H_{m+1}(SL_q(n))$ for
  every $m\geq 0$.
\end{theorem}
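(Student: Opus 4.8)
The plan is to push everything through the MacLane isomorphism of Proposition~\ref{prop:maclane-isomorphism}, which turns each Hochschild homology appearing in the statement into a $\tor$ group over $H$ (with $H = GL_q(n)$ or $H = SL_q(n)$) of the relevant adjoint module against the trivial module $k$. The statement then separates into two essentially independent tasks: first, to realize the quantum determinant as a morphism of the coefficient modules and to show that the induced map on $\tor$ is a split monomorphism; and second, to identify $H_m(H, {}_{f_{q,n}^{-1}} H)$ with $H_{m+1}(H)$. I would treat the split injection as the genuinely new input, and the degree-shift identification as the point where the external structure of these algebras has to be invoked.

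First I would make the map precise. Under the adjoint action $x^h = S^{-1}(h_{(1)}) x h_{(2)}$ the one-dimensional module ${}_{f_{q,n}^{-1}} k$ becomes the right module on which $h$ acts by the character $f_{q,n}^{-1}\circ S^{-1} = f_{q,n}$, so $\C{D}_q\colon 1\mapsto \C{D}_q$ must be checked to be a morphism of right $H$-modules into $ad({}_{f_{q,n}^{-1}} H)$, i.e. $\C{D}_q^{\,h} = f_{q,n}(h)\,\C{D}_q$ for every $h$. The verification is a Sweedler computation resting on three facts: $\C{D}_q$ is central and group-like with $S(\C{D}_q)=\C{D}_q^{-1}$; the relation $S^2 = f_{q,n}^{-1}\ast\Id\ast f_{q,n}$ recorded in the proof of Proposition~\ref{MPI-GL-SL} (equivalently the MPI identity $\widetilde S_{f_{q,n}^{-1}}^2 = \Id$); and the normalization $f_{q,n}(\C{D}_q)=1$ of Proposition~\ref{MPI-GL}. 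Concretely one expands $\C{D}_q^{\,h}$ using $\Delta S^{-1} = (S^{-1}\ot S^{-1})\tau\Delta$, pulls the central $\C{D}_q$ to the front, and collapses the remaining antipode string against the twisting character of $\theta_{f_{q,n}^{-1}}$ from Proposition~\ref{prop:skew}; it is exactly the modular pair condition that forces the collapse to the scalar $f_{q,n}(h)$.

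Next I would produce the splitting. Over $GL_q(n)$ the element $\C{D}_q$ is central and invertible, so multiplication by $\C{D}_q^{-1}$ followed by an $f_{q,n}$-twisted counit gives a right $H$-module retraction $ad({}_{f_{q,n}^{-1}} GL_q(n)) \to ad({}_{f_{q,n}^{-1}} k)$ of the map above; functoriality of $\tor$ then yields the split injection $H_m(GL_q(n), {}_{f_{q,n}^{-1}} k) \hookrightarrow H_m(GL_q(n), {}_{f_{q,n}^{-1}} GL_q(n))$. For $SL_q(n)$ one has $\C{D}_q = 1$, so the map is the unit $k\to SL_q(n)$ and the retraction is given directly by the $f_{q,n}$-twisted counit, using $\ve(\C{D}_q)=1$; here I would check that this counit is itself adjoint-equivariant, which again reduces to the same modular relation.

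The last and hardest step is the identification $H_m(H, {}_{f_{q,n}^{-1}} H)\cong H_{m+1}(H)$, and I expect this to be the main obstacle, since this is where the $+1$ shift is created rather than merely transported. For $GL_q(n)$ I would reduce to $SL_q(n)$ via Proposition~\ref{LS:SLNq}: because $f_{q,n}(\C{D}_q)=1$, the twist $\theta_{f_{q,n}^{-1}}$ is trivial on the central Laurent factor $k[\C{D}_q,\C{D}_q^{-1}]$, so ${}_{f_{q,n}^{-1}} GL_q(n) \cong {}_{f_{q,n}^{-1}} SL_q(n)\ot k[\C{D}_q,\C{D}_q^{-1}]$, and the Künneth-type splitting of \cite[Theorem 2.7]{KaygSutl18} together with the fact that $HH_\ast(k[\C{D}_q,\C{D}_q^{-1}])$ is $k$ in degrees $0$ and $1$ matches both statements with one another and with Theorem~\ref{thm:GLN-SLN}. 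The essential remaining point is the $SL_q(n)$ isomorphism, for which I would identify $\theta_{f_{q,n}^{-1}}$ with a representative of the Nakayama automorphism of the twisted Calabi--Yau algebra $SL_q(n)$ and invoke the Van den Bergh/Poincar\'e duality established in \cite{HadfKrae06}; the modular pair in involution is precisely what trivializes the relevant square of the antipode, so that the $f_{q,n}^{-1}$-twisted homology carries the restored fundamental class and is tied to the untwisted homology by a single-degree shift. Reconciling the dimension-drop phenomenon for the untwisted coefficients against the full duality for the twisted ones is the step I expect to demand the most care.
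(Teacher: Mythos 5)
Your first two steps track the paper's argument closely: the paper also passes through the MacLane isomorphism of Proposition~\ref{prop:maclane-isomorphism} and obtains the splitting by exhibiting ${}_{f_{q,n}^{-1}}k$, spanned by the central element $\C{D}_q$, as a direct summand of $ad({}_{f_{q,n}^{-1}}H)$. Where you differ is how the summand is split off: the paper appeals to a decomposition of the adjoint module into irreducibles with multiplicities (a cosemisimplicity/Peter--Weyl input at generic $q$), whereas you build an explicit equivariant retraction from $\C{D}_q^{-1}$ and a twisted character. That substitution is legitimate and arguably more self-contained, but note that centrality of $\C{D}_q$ alone does not make $k\,\C{D}_q$ a submodule of the \emph{twisted} adjoint module: one picks up a factor of the form $S(h_{(1)})\,f_{q,n}^{\pm 1}(h_{(2)})\,h_{(3)}$, and showing this collapses to a scalar is exactly the Sweedler/MPI computation you sketch for the inclusion; the same verification is needed for your retraction, so do not treat it as free.

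The genuine gap is in the last step, the identification $H_m(H,{}_{f_{q,n}^{-1}}H)\cong H_{m+1}(H)$, and your proposed route does not close it. The paper gets this shift from an untwisting argument comparing the twisted and untwisted Hochschild complexes directly (citing \cite{KaygSutl16}); it does not go through duality. Your route via the twisted Calabi--Yau structure of \cite{HadfKrae06} produces an isomorphism of the shape $H_m(A,{}_{\sigma}A)\cong H^{D-m}(A,A)$ --- twisted homology against untwisted \emph{cohomology} in complementary degree. To convert $H^{D-m}(A,A)$ into $H_{m+1}(A,A)$ you would need a second duality for \emph{untwisted} coefficients, and that is precisely what fails for these algebras (the dimension-drop phenomenon you mention is the obstruction, not a technicality to be "reconciled"). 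Moreover, the automorphism $\theta_{f_{q,n}^{-1}}$ appearing in the statement is a one-sided character twist, while the Nakayama/modular automorphism of $SL_q(n)$ in \cite{HadfKrae06} involves $S^2$ together with character twists on both sides; identifying ${}_{f_{q,n}^{-1}}H$ with the dualizing bimodule is therefore itself unjustified. As written, the Poincar\'e-duality strategy cannot produce the claimed degree-$(m+1)$ untwisted homology, so this step needs to be replaced by a direct comparison of the twisted and untwisted complexes.
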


\begin{proof}
  We use Proposition~\ref{prop:maclane-isomorphism} for $H=GL_q(n)$ with
  coefficients in the twisted module ${}_{f_{q,n}^{-1}}H$.  The adjoint module
  $ad({}_{f_{q,n}^{-1}}H)$ splits as a direct sum of irreducible submodules with
  multiplicities, and one of those modules is the module ${}_{f_{q,n}^{-1}}k$.
  In the case $H=GL_q(n)$ or $H=SL_q(n)$, the quantum determinant $\C{D}_q$ lies
  inside that submodule since $\C{D}_q$ is in the center.  Then
  $\tor^H_*({}_{f_{q,n}^{-1}} k,k)$ is a direct summand of
  $H_*(H,{}_{f_{q,n}^{-1}}H)$.  The rest follows from an untwist as
  in~\cite[Section 2.9]{KaygSutl16} but for Hochschild homology.
\end{proof}

\section{Explicit calculations}\label{sect:calculations}


\subsection{$M_q(2)$, $GL_q(2)$ and $SL_q(2)$}

The character $f_{q,2}^{-1}$ is given by the sequence $(q,q^{-1})$.
Then, $|\alpha| = 0$ and
\[ H_\ell(M_q(2),{}_{(q,q^{-1})}k) = \bigoplus_s \Lambda^{\ell-s}_{(q,q^{-1})}(x_{12},x_{21})
  \otimes \Lambda^s(x_{11},x_{22}). \]
One can write 4 different exterior product between $x_{12}$ and $x_{21}$, and
\begin{align}
  \deg((1)) = & (0,0)\nonumber\\
  \deg((x_{21})) = \deg((x_{12})) = & (1,-1)\\
  \deg((x_{12},x_{21})) = & (2,-2)\nonumber
\end{align}
from which we only take the degree $(1,-1)$-terms of exterior degree 1.  Thus
\begin{equation}
  H_\ell(M_q(2),{}_{(q,q^{-1})}k) =
  \begin{cases}
    0 & \text{ if $\ell=0$ or $\ell\geq 4$},\\
    Span_k((x_{12}),(x_{21})) & \text{ if } \ell=1,\\
    Span_k((x_{11},x_{12}),(x_{11},x_{21}),(x_{12},x_{22}),(x_{21},x_{22})) & \text{ if }\ell=2,\\
    Span_k((x_{11},x_{12},x_{22}),(x_{11},x_{21},x_{22})) & \text{ if } \ell=3.
  \end{cases}  
\end{equation}
The Betti numbers of the homology are given in Figure~\ref{fig:1}.

\begin{figure}[ht]
  \centering
  \begin{tabular}[]{|r|c|c|c|}\hline
    $m$ & 1 & 2 & 3 \\\hline
    $\dim_k H_m(M_q(2),{}_{f_{q,2}^{-1}} k)$ & 2 & 4 & 2 \\\hline
    $\dim_k H_m(GL_q(2),{}_{f_{q,2}^{-1}} k)$ & 2 & 4 & 2 \\\hline
    $\dim_k H_m(SL_q(2),{}_{f_{q,2}^{-1}} k)$ & 2 & 2 &  \\\hline
  \end{tabular}  
  \caption{The Betti numbers for $M_q(2)$, $GL_q(2)$ and $SL_q(2)$}
  \label{fig:1}
\end{figure}

\subsection{$M_q(3)$, $GL_q(3)$ and $SL_q(3)$}

The character $f_{q,3}^{-1}$ is given by
$(q^2,1,q^{-2})$ and $|\alpha|=0$.  The exterior degree 1 terms are
\begin{align}
  \deg((x_{12})) = \deg((x_{21})) = & (1,-1,0) \nonumber\\
  \deg((x_{13})) = \deg((x_{31})) = & (1,0,-1) \\
  \deg((x_{23})) = \deg((x_{32})) = & (0,1,-1) \nonumber
\end{align}
and we need terms of degree signature $(2,0,-2)$.  We must solve a
system of $\B{Z}$-linear equations
\[(\alpha_1,\alpha_2,\alpha_3)
  \left[\begin{matrix}
      1 & -1 & 0\\
      1 &  0 & -1\\
      0 &  1 & -1
    \end{matrix}\right]
  = (2,0,-2),
\]
where $\alpha_i\in\{0,1,2\}$.  The only solutions are
\[ (\alpha_1,\alpha_2,\alpha_3) = (2,0,2) \quad\text{ or }\quad
  (\alpha_1,\alpha_2,\alpha_3) = (1,1,1).\] For the first solution,
there is only one term of exterior degree 4:
$(x_{12},x_{21},x_{23},x_{32})$.  On the other hand, for the second
solution there are 8 such terms of exterior degree 3.  Then we use the
exterior algebra on $x_{11}$, $x_{22}$ and $x_{33}$ to promote these
terms to higher degrees.  In short, we have:
\begin{align}
  H_{3+\ell}(M_q(3), & {}_{(q^2,1,q^{-2})}k) \nonumber\\
  = Span_k\Big( & (x_{12},x_{13},x_{23}),
                  (x_{12},x_{13},x_{32}),
                  (x_{12},x_{31},x_{23}),
                  (x_{12},x_{31},x_{32}),\nonumber\\
              & \qquad (x_{12},x_{13},x_{23}),
                (x_{12},x_{13},x_{32}),
                (x_{12},x_{31},x_{23}),
                (x_{12},x_{31},x_{32})\Big)\otimes\Lambda^\ell(x_{11},x_{22},x_{33})\\
  & \oplus Span_k((x_{12},x_{21},x_{23},x_{32}))\otimes\Lambda^{\ell-1}(x_{11},x_{22},x_{33}).\nonumber
\end{align}
The Betti numbers of the homology are given in Figure~\ref{fig:2}.

\begin{figure}[ht]
  \centering
  \begin{tabular}[]{|r|c|c|c|c|c|}\hline
    $m$ & 3 & 4 & 5 & 6 & 7 \\\hline
    $\dim_k H_m(M_q(3),{}_{f_{q,3}^{-1}} k) $ & 8 & 25 & 27 & 11 & 1\\\hline
    $\dim_k H_m(GL_q(3),{}_{f_{q,3}^{-1}} k) $ & 8 & 25 & 27 & 11 & 1\\\hline
    $\dim_k H_m(SL_q(3),{}_{f_{q,3}^{-1}} k) $ & 8 & 17 & 10 & 1 & \\\hline
  \end{tabular}  
    \caption{The Betti numbers for $M_q(3)$, $GL_q(3)$ and $SL_q(3)$}
  \label{fig:2}
\end{figure}

\subsection{$M_q(4)$, $GL_q(4)$ and $SL_q(4)$}

The character $f_{q,4}^{-1}$ is now given by the sequence
$\alpha = (q^3,q,q^{-1},q^{-3})$ with $|\alpha|=0$.  The exterior
degree 1 terms are
\begin{align}
  \deg((x_{12})) = \deg((x_{21})) = & (1,-1,0,0)
  & \deg((x_{23})) = \deg((x_{32})) = & (0,1,-1,0) \nonumber\\
  \deg((x_{13})) = \deg((x_{31})) = & (1,0,-1,0)
  & \deg((x_{24})) = \deg((x_{42})) = & (0,1,0,-1)\\
  \deg((x_{14})) = \deg((x_{41})) = & (1,0,0,-1)  \nonumber
  & \deg((x_{34})) = \deg((x_{43})) = & (0,0,1,-1)
\end{align}
and we need the total multi-degree $(3,1,-1,-3)$.  Thus we solve
\[(\alpha_1,\alpha_2,\alpha_3,\alpha_4,\alpha_5,\alpha_6)
  \left[\begin{matrix}
      1 & -1 &  0 &  0\\
      1 &  0 & -1 &  0\\
      1 &  0 &  0 & -1\\ 
      0 &  1 & -1 &  0\\
      0 &  1 &  0 & -1\\
      0 &  0 &  1 & -1
    \end{matrix}\right]
  = (3,1,-1,-3)
\]
again with the restriction that $\alpha_i\in\{0,1,2\}$.
The Betti numbers of this case are given in Figure~\ref{fig:3}.

\begin{figure}[ht]
  \centering
  \begin{tabular}[]{|r|c|c|c|c|c|c|c|c|c|c|c|c|c|}\hline
     $m$  & 2 & 3 & 4 & 5 & 6  &  7 & 8 & 9 & 10 & 11 & 12 & 13 & 14 \\\hline
    $\dim_k H_m(M_q(4),{}_{f_{q,4}^{-1}} k)$
         & 8& 40 & 80 & 96& 176& 408& 560& 408& 176& 96& 80& 40 & 8\\\hline
    $\dim_k H_m(GL_q(4),{}_{f_{q,4}^{-1}} k)$
         & 8& 40 & 80 & 96& 176& 408& 560& 408& 176& 96& 80& 40 & 8\\\hline
    $\dim_k H_m(SL_q(4),{}_{f_{q,4}^{-1}} k)$
         & 8 & 32 & 48 & 48& 128& 280& 280& 128& 48& 48 & 32 & 8 & \\\hline
  \end{tabular}  
 \caption{The Betti numbers for $M_q(4)$, $GL_q(4)$ and $SL_q(4)$}
  \label{fig:3}
\end{figure}

\bibliographystyle{plain}
\bibliography{references}{}

\end{document}